\numberwithin{equation}{section}
\theoremstyle{plain}
\newtheorem{theorem}{Theorem}
\newtheorem{corollary}[theorem]{Corollary}
\newtheorem{proposition}[theorem]{Proposition}
\theoremstyle{definition}
\newtheorem{definition}{Definition}
\def\vol{\on{vol}} 
\def\dvol{\ud\!\vol}
\def\R{{\mathbb R}}
\def\ad{\operatorname{ad}}
\def\Jac{\operatorname{Jac}}
\let\on=\operatorname
\newcommand{\ud}{\,\mathrm{d}}
\let\mc=\mathcal
\newcommand{\eqdef}{\ensuremath{\stackrel{\mbox{\upshape\tiny def.}}{=}}}
\begin{document}
\title[Embedding CH in incompressible Euler]{Embedding Camassa-Holm equations in incompressible Euler
}
\author{Andrea Natale}
\address{INRIA, Project team Mokaplan}
\email{andrea.natale@inria.fr}
\author{Fran\c{c}ois-Xavier Vialard}
\address{Universit\'e Paris-Dauphine, PSL Research University, Ceremade \\ INRIA, Project team Mokaplan}
\email{fxvialard@normalesup.org}

\maketitle

\begin{center}

\end{center}
\begin{abstract}
In this article, we show how to embed the so-called CH2 equations into the geodesic flow of the $H^{\on{div}}$ metric in 2D, which, itself, can be embedded in the incompressible Euler equation of a non compact Riemannian manifold. The method consists in embedding the incompressible Euler equation with a potential term coming from classical mechanics into incompressible Euler of a manifold and seeing the CH2 equation as a particular case of such fluid dynamic equation.
\end{abstract}

\section{Introduction}

The Camassa-Holm (CH) equation as introduced in \cite{CH} is a one dimensional PDE, which is a nonlinear shallow water wave equation \cite{Constantin2008} and is usually written as
\begin{equation}\label{StandardCH}
\partial_{t} u -  \partial_{txx} u +3 \partial_{x} u \,u  - 2\partial_{xx} u \,\partial_x u - \partial_{xxx} u \,u = 0\,.
\end{equation}
This equation has generated a large volume of literature studying its many properties and it has drawn a lot of interest in various communities. Indeed, the physical relevance of this equation is not limited to shallow water dynamics since, for instance, it has been retrieved as a model for the propagation of nonlinear waves in cylindrical hyper-elastic rods \cite{Dai1998}. From the mathematical point of view, it possesses a bi-Hamiltonian structure and can be interpreted as a geodesic flow for an $H^1$ Sobolev metric on the diffeomorphism group on the real line or the circle  \cite{Kouranbaeva1999,Holm1998,Constantin2003}. Of particular interest, peakons (on the real line) are particular solutions of the form 
\begin{equation}\label{EqPeakons}
u(t,x) = \sum_{i = 1}^k p_i(t)e^{-|q_i(t) - x|}\,,
\end{equation}
which can be retrieved as length minimizing solutions for the $H^1(\R)$ metric when the initial and final positions of points $q_i$ are prescribed. On the circle, peakons are described by a slightly different form than \eqref{EqPeakons} where the Green function of $H^1(S_1)$ on the circle replaces the exponential. For particular peakons solutions, blow-up in finite time occurs and it is well-understood, see \cite{SurveyCH}. The blow-up, also called breakdown, of the CH equation has been proposed as a model for wave breaking.
\par
Importantly, the CH equation has been derived in \cite{CH} by an asymptotic expansion from the incompressible Euler equation in the shallow water regime. It is then natural to ask to what extent they differ from the incompressible Euler equations written on a general Riemannian manifold as proposed in \cite{Ebin1970}. In the rest of the paper, all our statements are concerned with strong solutions of the corresponding PDE. Recently, we have proven that the CH equation on $S_1$ is linked with the incompressible Euler equation on $S_1 \times \R_{>0}$. More precisely,
in \cite{VialardGallouetCH}, we prove that the CH equation
\begin{equation} \label{Eq:CH}
\partial_{t} u - \frac14 \partial_{txx} u +3 \partial_{x} u \,u  - \frac12\partial_{xx} u \,\partial_x u - \frac14\partial_{xxx} u \,u = 0\,,
\end{equation}
which describes geodesics on $\on{Diff}(S_1)$ for the right-invariant metric defined by the Sobolev norm $\|u\|^2 + \frac 14 \|u'\|^2$ on the tangent space at identity, can be mapped to particular solutions of the incompressible Euler equation
\begin{equation}\label{Eq:Euler}
\begin{cases}
\dot{v} + \nabla_v v = -\nabla p\,,\\
\nabla \cdot (\rho v) = 0\,,
\end{cases}
\end{equation}
for an appropriately chosen density $\rho(\theta,r)$ on the plane $\R^2 \setminus \{0\}$.
The mapping between the solutions is explicitly given below.
\begin{theorem}[\cite{VialardGallouetCH}]\label{ThEmbedSimple}
Let $u(t,\theta)$ be a smooth solution to the Camassa-Holm equation \eqref{Eq:CH} on $S_1$ then $w: (\theta,r) \mapsto ( u(t,\theta),r \partial_\theta u(t,\theta)/2)$ is a vector field on $S_1 \times \R_{>0}$ and it is a solution of the incompressible Euler equation \eqref{Eq:Euler} for the density $\frac{1}{r^4} r\ud r \ud \theta$.
\end{theorem}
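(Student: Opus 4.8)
The plan is to work throughout in polar coordinates $(\theta,r)$ on $S_1\times\R_{>0}\cong\R^2\setminus\{0\}$, equipped with the flat metric $g=\ud r^2+r^2\,\ud\theta^2$, so that the weighting measure is $\mu=\frac{1}{r^4}\,r\,\ud r\,\ud\theta=r^{-3}\,\ud r\,\ud\theta$. In these coordinates the vector field reads $w=u\,\partial_\theta+\tfrac12\,r\,\partial_\theta u\,\partial_r$, and there are exactly two assertions to verify: the weighted incompressibility $\nabla\cdot(\rho w)=0$, and the existence of a scalar $p$ with $\partial_t w+\nabla_w w=-\nabla p$, where the covariant derivative and the gradient are those of $g$ (recall that for the right-invariant $L^2(\mu)$ metric on $\on{Diff}_\mu$ the orthogonal complement of the $\mu$-divergence-free fields consists precisely of $g$-gradients, which justifies the form $-\nabla p$).

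For incompressibility I would compute $\operatorname{div}_\mu w=r^{3}\big(\partial_\theta(r^{-3}u)+\partial_r(r^{-3}\cdot\tfrac12 r\,\partial_\theta u)\big)$. The first term is $r^{-3}\partial_\theta u$ and the second is $-r^{-3}\partial_\theta u$, so they cancel identically; this step is purely mechanical and uses nothing about the CH equation. (It is in fact this cancellation that selects the exponent in the density $r^{-4}$.)

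The substance is in the momentum equation. Using the only nonzero Christoffel symbols $\Gamma^r_{\theta\theta}=-r$ and $\Gamma^\theta_{r\theta}=\Gamma^\theta_{\theta r}=r^{-1}$, I would compute the two components of the covariant acceleration $a\eqdef\partial_t w+\nabla_w w$, expecting $a^\theta=\partial_t u+2u\,\partial_\theta u$ and $a^r=\tfrac{r}{2}\partial_{t\theta}u+\tfrac{r}{2}u\,\partial_{\theta\theta}u+\tfrac{r}{4}(\partial_\theta u)^2-r\,u^2$. Since $(\nabla p)^\theta=r^{-2}\partial_\theta p$ and $(\nabla p)^r=\partial_r p$, the equation $a=-\nabla p$ is the pair $\partial_\theta p=-r^2 a^\theta$ and $\partial_r p=-a^r$. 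The key move is then to impose the Schwarz integrability condition $\partial_r\partial_\theta p=\partial_\theta\partial_r p$: after differentiating, dividing by $r$ (legitimate since $r>0$) and collecting terms, I expect this to collapse exactly to $\partial_t u-\tfrac14\partial_{t\theta\theta}u+3u\,\partial_\theta u-\tfrac12\partial_\theta u\,\partial_{\theta\theta}u-\tfrac14 u\,\partial_{\theta\theta\theta}u=0$, i.e. precisely \eqref{Eq:CH}. Hence compatibility of the two scalar equations for $p$ is \emph{equivalent} to $u$ solving CH, and one then reconstructs $p$ by integration.

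The step I expect to be the main obstacle, beyond the bookkeeping of the covariant derivative, is checking that the reconstructed pressure is a genuinely single-valued smooth function on $S_1\times\R_{>0}$: because $\theta$ lives on a circle, the closed $1$-form $-a^r\,\ud r-r^2 a^\theta\,\ud\theta$ must have vanishing period around each circle $\{r=\mathrm{const}\}$, and since $S_1\times\R_{>0}$ retracts onto $S_1$ this single period class is the only obstruction to exactness. That period is proportional to $\oint_{S_1}\big(\partial_t u+2u\,\partial_\theta u\big)\,\ud\theta=\tfrac{\ud}{\ud t}\oint_{S_1}u\,\ud\theta$, the advective term integrating to zero. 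So the final point is to invoke conservation of the mean $\oint_{S_1}u\,\ud\theta$ along the CH flow, which follows by integrating \eqref{Eq:CH} over $S_1$, every term being a $\theta$-derivative or integrating to zero by parts. Once this period vanishes the pressure exists globally, and the proof is complete.
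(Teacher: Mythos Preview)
Your argument is correct and follows essentially the same route as the paper's elementary proof in Appendix~\ref{SecCHComputational}: verify the weighted divergence cancellation, compute the covariant acceleration of $w$ in polar coordinates, and reduce the existence of a pressure to a compatibility condition that is exactly~\eqref{Eq:CH}. The only methodological difference is in how the pressure is extracted. The paper observes that both components of the acceleration have the form $r\cdot(\text{function of }\theta)$, which forces the ansatz $P(\theta,r)=r^2 p(\theta)$; one equation then determines $p$ \emph{algebraically}, and the other becomes CH. You instead impose the Schwarz condition $\partial_r\partial_\theta p=\partial_\theta\partial_r p$ and then integrate. Your route is more systematic (it does not require spotting the $r^2$ ansatz) but incurs the extra obligation of checking that the closed $1$-form is exact on $S_1\times\R_{>0}$, which you handle correctly via conservation of the mean $\oint u\,\ud\theta$. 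The paper's explicit ansatz sidesteps this periodicity issue entirely, since $p$ is given pointwise rather than by integration; on the other hand, your treatment of the period is a point the paper leaves implicit.
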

From a Lagrangian point of view, the mapping between the solutions is given by a sort of Madelung transform. Let $\varphi$ be the flow of a smooth solution to the Camassa-Holm equation \eqref{Eq:CH} then $\Psi(\theta,r) \eqdef r \sqrt{\partial_\theta \varphi(\theta)} e^{i\varphi(\theta)}$ is the flow of a solution to the incompressible Euler equation \eqref{Eq:Euler} for the density $\frac{1}{r^4} r\ud r \ud \theta$.
Note that Equation \eqref{Eq:Euler} for this particular density is not exactly the incompressible Euler equation on a Riemannian manifold $(M,g)$ as in \cite{Ebin1970}, which reads
\begin{equation}\label{EqEbinIE}
\begin{cases}
\dot{v} + \nabla_v v = -\nabla p \\
\on{div}_g(v) = 0\,,
\end{cases}
\end{equation}
where $\nabla$ is the Levi-Civita connection associated with the metric $g$ and $\on{div}_g$ denotes the divergence with respect to the volume form associated with $g$.
\par
Motivated by the results in \cite{VialardGallouetCH,TaoUniversality}, the main result of the paper is to embed the CH2 equation, as introduced in \cite{CH2First}, in the incompressible Euler equation. The CH2 equations are a generalization of the CH equation which consists in adding to the CH equation a pressure term depending on an advected density. The CH2 equations read
\begin{equation}\label{EqCH2}
\begin{cases}
\partial_t m + um_x + 2mu_x = -g \rho \partial_x \rho \\
m = u - \partial_{xx}u\\
\partial_t \rho + \partial_x (\rho u) = 0\,,
\end{cases}
\end{equation}
where $\rho(t = 0)$ is a positive density and $g$ is a positive constant.
The embedding is achieved in two steps: first we embed the CH2 equation in a generalized CH equation in 2D (associated with the $H^{\on{div}}$ right-invariant metric) and then we apply the corresponding generalized version of Theorem \ref{ThEmbedSimple}. At this point, it might seem tempting to use the existence of such an embedding to derive new results on one equation thanks to the knowledge of the other. However, the difficulty is pushed in the fact that the Riemannian manifold on which the equations live is often curved and non-complete, see the end of Section \ref{SecTaosTrick}. Nonetheless, this link is interesting from the point of view of classification of fluid dynamic models. The methods we use share some similarities with \cite{Preston2013} by the use of a non right-invariant metric and the Eisenhart lift although our motivation and results differ.
\par
The paper is organized as follows. We present a slight generalization of Tao's embedding of Boussinesq equation into incompressible Euler in Section \ref{SecTao}. Then, in Section \ref{SecHdiv} we recall the geometric arguments explaining Theorem \ref{ThEmbedSimple}, i.e.\ how the geodesic flow of the $H^{\on{div}}$ right-invariant metric can be embedded in the incompressible Euler equation of a cone manifold. Based on these two results, Section \ref{SecCH2embedding}  shows the embedding of the CH2 equation into incompressible Euler. 

\section{Embedding potential dynamics into Euler}\label{SecTao}
This section, based on \cite{TaoUniversality} and Tao's blog, shows how to embed some fluid dynamic equations such as Boussinesq into incompressible Euler. This will be needed in the next section. 
We are concerned with fluid dynamic equations that can be derived from Newton's law with a classical mechanical potential, see \cite[Example 2.2]{ModinMadelung}.
\par
Let $(M,g)$ be a closed Riemannian manifold equipped with the volume form $\vol$, $V: M \mapsto \R_{>0}$ be a smooth positive function on $M$ and $\rho_0$ a smooth positive density on $M$. Consider the following Lagrangian on $\on{SDiff}(M)$, the group of volume preserving diffeomorphisms of $M$,
\begin{equation}\label{EqPotentialFluidDynamic}
\mathcal{L}(\varphi,\dot{\varphi}) = \int_M \frac 12 \|\dot{\varphi}(x)\|_{(g\circ \varphi)(x)}^2  \, \dvol(x) - \int_M V(\varphi) \rho_0(x) \,\dvol(x) \,,
\end{equation}
where $\dvol$ denotes the measure associated with $\vol$ and $ \|\dot{\varphi}(x)\|_{(g\circ \varphi)(x)}^2$ is the same as $g(\varphi(x))(\dot{\varphi}(x),\dot{\varphi}(x))$. The Euler-Lagrange equation for this Lagrangian is 
\begin{equation}\label{EqParticleDynamic}
\frac{D \dot{\varphi}}{Dt}  = - \nabla V(\varphi) \rho_0 - \nabla p \circ \varphi
\end{equation}
where $\frac{{D}}{{D}t}$ is the covariant derivative associated with the metric $g$ and where the pressure $p$ is a time dependent function which accounts for the incompressibility constraint. This equation, rewritten in Eulerian coordinates, introducing the velocity field $u = \dot{\varphi} \circ \varphi^{-1}$, gives
\begin{equation}\label{EqPotential}
\begin{cases}
&\partial_t u + \nabla_u u = - \nabla V \rho - \nabla p\\
& \on{div}_g(u) = 0\\
& \partial_t \rho + \on{div}_g(\rho u) = 0\,.
\end{cases}
\end{equation}
The last equation can also be written as an advection equation $\partial_t \rho + \langle \nabla \rho, u\rangle = 0$ since $\on{div}_g(u) = 0$. A particular case of this formulation is the Boussinesq equation when the potential is due to gravity. Our goal is to see the solutions of System \eqref{EqPotential} as particular solutions of the incompressible Euler equation \eqref{EqEbinIE} on a possibly curved Riemannian manifold. A possible way to achieve this is to formulate the equivalence at the Lagrangian level. The first step consists in interpreting the Hamiltonian evolution of a particle in the presence of an external force as a kinetic evolution with force. This is done in the next paragraph.
\par
\textbf{Eisenhart lift in classical mechanics.}
Since the work of Maupertuis and Jacobi, the fact that a potential evolution can be seen as a reparametrization of a geodesic flow is well-known. By potential dynamic, we mean  solutions of the equation
\begin{equation}\label{EqPotentialDynamic}
\ddot{x} = - \nabla V(x)\,.
\end{equation}
Indeed, let us consider a conformal change of a Riemannian metric $g$ into $e^{2\lambda}g$, where $\lambda$ is a function on $M$. Then, it is possible to find $\lambda$ in terms of $V$ such that the geodesic flow of $e^{2\lambda}g$, describes, \emph{up to a time reparametrization}, the potential dynamic. With the aim of embedding fluid dynamic equations into incompressible Euler, the metric has to be independent of the particle label. Therefore, the Jacobi-Maupertuis transform is not suitable for our purpose. 
\par
Eisenhart in 1929 \cite{Eisenhart} introduced a lifting procedure to describe the potential dynamic as the projection of geodesics on a Riemannian manifold of one additional dimension. Introducing $M \times S_1$ (the second factor can be $\R$ as well), consider the Riemannian metric $\tilde{g}(x,z) = g(x) + \frac{1}{V(x)} (\ud z)^2$. Then, the geodesic equations, written in Hamiltonian form, read
\begin{equation}\label{EqEisenhartHamiltonianLift}
\begin{cases}
\dot{p} = -\partial_x H(p,x) - \frac{1}{2}n^2\partial_x V(x)\\
\dot{n} = 0 \\
\dot{x} = \partial_p H(p,x) \\
\dot{z} = \frac{1}{V(x)} n
\end{cases}
\end{equation}
where $H(p,x) = \frac{1}{2} \langle p,g^{-1}(x)p \rangle$. This system says that the couple $(p,x)$ follows the potential dynamic equation, provided that the constant of the motion is chosen as $n = \sqrt{2}$; Equation \eqref{EqPotentialDynamic} is obtained from System \eqref{EqEisenhartHamiltonianLift} by multiplying the first equation by the cometric $g^{-1}(x)$. We refer to Proposition \ref{ThEisenhartOnRiemannianSide} in Appendix \ref{SecWarpedMetrics} for more details on the geodesic equations associated with this new metric, which is a particular case of warped metrics (see Definition \ref{ThWarped}). 
\par
In order to be sufficiently general, let us remark that this lifting procedure also works when introducing an additional force to the potential dynamic. Indeed, $n$ will still be a constant of the motion. Thus, we have
\begin{proposition}[Eisenhart lift]\label{ThEisenhart}
Let $(M,g)$ be a Riemannian manifold, $V$ be a potential and $F$ be a vector field on $M$ possibly time dependent. Then, the solutions to the ODE system
\begin{equation}
\nabla_{\dot{x}}{\dot{x}} = -\nabla V(x) + F
\end{equation}
are projections on the first variable of the forced geodesic flow on $M \times S_1$ endowed with $\tilde{g}(x,z) =  g(x) + \frac{1}{2V(x)} (\ud z)^2$,
\begin{equation}
\nabla^{\tilde{g}}_{(\dot{x},\dot{z})}{(\dot{x},\dot{z})} = (F,0)\,.
\end{equation}
\end{proposition}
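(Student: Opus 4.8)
The plan is to reduce the statement to the classical computation of the geodesic equations of a warped product and then to exploit the fact that the added coordinate is cyclic. Write $z$ for the coordinate on the second factor and set $f(x) \eqdef \frac{1}{2V(x)}$, so that $\tilde g = g + f\,(\ud z)^2$; since $f$ depends on $x$ alone, this is a warped metric in the sense of Definition \ref{ThWarped}. First I would compute, in local coordinates $x^i$ on $M$, the Christoffel symbols of $\tilde g$. Because the metric has no cross terms and is $z$-independent, they reduce to $\tilde\Gamma^k_{ij} = \Gamma^k_{ij}$ (the symbols of $g$), to $\tilde\Gamma^i_{zz} = -\tfrac12 g^{ij}\partial_j f$, and to $\tilde\Gamma^z_{iz} = \frac{1}{2f}\partial_i f$, all remaining symbols vanishing; alternatively one reads these off directly from Proposition \ref{ThEisenhartOnRiemannianSide}.

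The central structural observation is that the geodesic Lagrangian $\frac12\,\tilde g\big((\dot x,\dot z),(\dot x,\dot z)\big)$ does not depend on $z$, and the prescribed force $(F,0)$ has vanishing $z$-component, so $z$ is cyclic for the forced flow. Concretely, the $z$-component of $\nabla^{\tilde g}_{(\dot x,\dot z)}(\dot x,\dot z) = (F,0)$ reads $\ddot z + \frac1f(\partial_i f)\dot x^i\,\dot z = 0$, which is exactly $\frac{d}{dt}(f\dot z) = 0$. Hence the momentum $p_z \eqdef f\dot z = \frac{1}{2V}\dot z$ is conserved along every forced geodesic, independently of the time dependence of $F$. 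This conservation is the engine of the whole argument.

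Then I would write the $M$-valued component of the forced geodesic equation. Using the Christoffel symbols above it becomes $\nabla_{\dot x}\dot x = -\tilde\Gamma^{\,\cdot}_{zz}\,\dot z^2 + F = \tfrac12\,\dot z^2\,\nabla f + F$, where $\nabla$ and $\nabla f$ are taken with respect to $g$. Substituting $f = \frac{1}{2V}$ gives $\nabla f = -\frac{1}{2V^2}\nabla V$, so the warping term equals $-\frac{\dot z^2}{4V^2}\,\nabla V$. This term depends a priori on the trajectory through $\dot z$; the Eisenhart mechanism is that the conservation law renders it label-independent. Indeed $\dot z = 2V p_z$, whence $\dot z^2 = 4V^2 p_z^2$ and the warping term collapses to $-p_z^2\,\nabla V$. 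Choosing the initial velocity so that $p_z = 1$, equivalently $\dot z(0) = 2V(x(0))$, yields exactly $\nabla_{\dot x}\dot x = -\nabla V + F$. (The value $p_z = 1$ corresponds to the $n = \sqrt2$ of the Hamiltonian display \eqref{EqEisenhartHamiltonianLift}, the discrepancy being the factor $\tfrac12$ in the warping coefficient.)

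Finally I would verify the correspondence in both directions. Projecting any forced geodesic with $p_z = 1$ onto $M$ gives a solution of the Newton equation by the computation above; conversely, given a solution $x(\cdot)$ of $\nabla_{\dot x}\dot x = -\nabla V + F$, defining $z$ by $\dot z = 2V(x(t))$ with arbitrary $z(0)$ produces a curve solving $\nabla^{\tilde g}_{(\dot x,\dot z)}(\dot x,\dot z) = (F,0)$, since its $z$-equation holds by construction and its $M$-equation is precisely the one we started from. The only genuinely delicate point is the bookkeeping of the normalization constant: the cancellation of the $V^2$ coming from $\partial(1/2V)$ against $\dot z^2 = 4V^2 p_z^2$ is what pins down both the warping coefficient $\frac{1}{2V}$ and the value $p_z = 1$, and this is where a sign or factor error would most likely enter; the remainder is a routine warped-product computation.
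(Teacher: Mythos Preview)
Your proof is correct and follows essentially the same approach as the paper: the paper derives the result via the Hamiltonian form \eqref{EqEisenhartHamiltonianLift} and the warped-metric geodesic equations of Appendix~\ref{SecWarpedMetrics}, observing that the $z$-momentum $n$ is conserved (even in the presence of the force $F$) and that the $M$-component then reduces to the Newton equation; you carry out the equivalent computation on the Lagrangian side via Christoffel symbols, with the same cyclic-variable mechanism and the same normalization bookkeeping. Your version is in fact more explicit than the paper's sketch.
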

Note that a multiplicative constant on the potential can be taken care of in the constant of the motion which is $n$, the momentum variable associated with $z$.
\par
As a consequence, we have that solutions to Equation \eqref{EqParticleDynamic} correspond to projections on the first variable of $\psi = (\varphi,\lambda)$, which satisfies the equation
\begin{equation}\label{EqRewriting}
\nabla^{\tilde{g}}_{\dot{\psi}} \dot{\psi} = - \nabla^{\tilde{g}}(p,0)\,,
\end{equation}
and the constraint is $\varphi \in \on{SDiff}(M)$. In order to understand Equation \eqref{EqRewriting} as an incompressible Euler equation, the first step is to check that a density is preserved by the flow $\psi$ on $M \times S_1$. Note that $\lambda$ is completely determined by its initial value $\dot{\lambda}(x,y) = V(\varphi(x))\sqrt{2} \rho_0(x)$ which only depends on $x$. Thus, $\lambda$ is a rotation in the $y$ variable with a rotation angle which depends on $x$. As a consequence, the volume form $\vol \wedge \ud z$ is preserved. Thus, $\psi$ preserves a density and it solves \eqref{EqIEWithAnotherDensity}, which can be embedded in the incompressible Euler equation, as described in the introduction. Therefore, we have
\begin{proposition}[Tao's blog and \cite{TaoUniversality}]
The solutions to the incompressible fluid dynamic equations with potential \eqref{EqPotential} are projections of particular solutions to the incompressible Euler equation on $M \times S_1 \times S_1$ for the metric $g(x) + \frac{1}{V(x)}(\ud z)^2 +  V(x) (\ud y)^2$.
\end{proposition}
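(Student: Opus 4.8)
The plan is to read the pressure as the external force $F$ in the Eisenhart lift and then to correct the preserved density into the Riemannian volume by adjoining the second circle factor. First I would apply Proposition \ref{ThEisenhart} with force $F = -\nabla p\circ\varphi$: this identifies solutions of the constrained particle equation \eqref{EqParticleDynamic} with projections onto $M$ of a flow $\psi = (\varphi,\lambda)$ on $M\times S_1$ satisfying \eqref{EqRewriting}, where the conserved fiber momentum $n(x)$ is fixed by the initial condition $\tfrac12 n(x)^2 = \rho_0(x)$, so that the induced potential force is exactly $-\nabla V(\varphi)\rho_0$ (the multiplicative constant being absorbed into $n$, as noted after Proposition \ref{ThEisenhart}). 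Since the fiber velocity $\dot\lambda = V(\varphi)\,n$ depends on the Lagrangian label $x$ only, $\lambda$ acts as an $x$-dependent shift in the $z$-direction; combined with $\varphi\in\on{SDiff}(M)$, this shows that $\psi$ preserves the product volume $\vol\wedge\ud z$ on $M\times S_1$. Hence $\psi$ solves the incompressible Euler equation on $(M\times S_1,\, g+\frac1V(\ud z)^2)$, but taken with respect to the density $\vol\wedge\ud z$ rather than the Riemannian one, i.e.\ the density-weighted situation of Theorem \ref{ThEmbedSimple}.

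The second step removes this density mismatch. Writing $h \eqdef g + \frac1V(\ud z)^2$, its Riemannian volume is $\frac{1}{\sqrt V}\,\vol\wedge\ud z$, so $\psi$ preserves $\sqrt V$ times the Riemannian volume of $h$. I would therefore append a second factor $S_1$ with coordinate $y$ and metric $V(x)(\ud y)^2$, forming $\hat N = M\times S_1\times S_1$ with the block-diagonal metric $\hat g = g + \frac1V(\ud z)^2 + V(\ud y)^2$. The key computation is that $\det\hat g = \det g\cdot\frac1V\cdot V = \det g$, whence the Riemannian volume of $\hat g$ equals the product volume $\vol\wedge\ud z\wedge\ud y$. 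Extending $\psi$ trivially in $y$, that is setting $\hat\psi(x,z,y) = (\psi(x,z),y)$, the map $\hat\psi$ preserves $\vol\wedge\ud z\wedge\ud y$ and hence lies in $\on{SDiff}(\hat N)$ for the genuine Riemannian volume, which is exactly what \eqref{EqEbinIE} demands.

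It then remains to check that $\hat\psi$ also satisfies the acceleration equation of incompressible Euler on $(\hat N,\hat g)$. Because $g$ and $V$ are independent of $y$ and the metric is block-diagonal, the $y$-direction decouples: the only Christoffel symbols that could force $y$ from the $(x,z)$-motion, namely $\Gamma^y_{\alpha\beta}$ with $\alpha,\beta\neq y$, vanish, so $\dot y\equiv 0$ is preserved, and choosing the Euler pressure $\hat p$ independent of $y$ kills the $y$-component of $\nabla^{\hat g}\hat p$. Restricting to the $(x,z)$ block, the induced Levi-Civita connection and cometric coincide with those of $h$, so the geodesic-with-pressure equation for $\hat\psi$ reduces precisely to \eqref{EqRewriting}; taking $\hat p = p$ (viewed as a function of $x$ alone) closes the loop. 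Projecting onto the first factor recovers $\varphi$, hence a solution of \eqref{EqPotential}.

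The hard part, which I would write out most carefully, is the interface between the two incompressibility constraints: one must verify that a single pressure, chosen independent of both added fiber variables, simultaneously plays the role of the Lagrange multiplier for $\on{div}_g u = 0$ on $M$ and of the Euler pressure on $\hat N$, and that the divergence-free condition for the Riemannian volume of $\hat g$ restricts exactly to $\on{div}_g u = 0$. This is what the determinant identity $\det\hat g = \det g$ guarantees, and it is precisely why the warping coefficient of $(\ud y)^2$ must be $V$, cancelling the $\frac1V$ produced by the Eisenhart fiber; any other choice would leave a label-dependent density that the trivial $y$-extension could not preserve. A secondary point to handle with care is the normalization of the conserved fiber momentum $n$, so that the Eisenhart potential term matches $\nabla V\rho_0$ exactly, since the advected density $\rho_0$ enters the whole construction only through this initial condition.
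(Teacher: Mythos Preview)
Your proposal is correct and follows exactly the paper's two-step route: first the Eisenhart lift (Proposition~\ref{ThEisenhart}) with the pressure carried along as an external force, yielding \eqref{EqRewriting} together with preservation of $\vol\wedge\ud z$; then Tao's density-correction trick (Section~\ref{SecTaosTrick}, equation \eqref{EqIEWithAnotherDensity}) to turn the preserved product volume into the Riemannian volume by adjoining the second circle with warping $V$. You in fact spell out more than the paper does---the determinant identity $\det\hat g=\det g$, the vanishing of the mixed Christoffel symbols $\Gamma^y_{\alpha\beta}$, and the role of the label-dependent fiber momentum $n(x)$ encoding $\rho_0$---all of which are implicit in the paper's one-paragraph argument preceding the proposition. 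One cosmetic point: where you write ``the density-weighted situation of Theorem~\ref{ThEmbedSimple}'' you mean the situation of equation \eqref{EqIEWithAnotherDensity}; Theorem~\ref{ThEmbedSimple} is the CH-specific instance, not the general mechanism you are invoking.
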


\section{Embedding CH into Euler}\label{SecHdiv}

This section, based on \cite{VialardGallouetCH}, gives a sketch of the geometric arguments for the embedding of the $H^{\on{div}}$ geodesic flow on a closed Riemannian manifold $(M,g)$ and a computational proof of it.
\subsection{The geometric argument}
Consider the group of diffeomorphism $\on{Diff}(M)$ endowed with the right-invariant $H^{\on{div}}$ metric, that is, the norm is defined in Eulerian coordinates on the velocity field $u$ by 
\begin{equation}\label{eq:hdivmetric}
\| u \|^2_{H^{\on{div}}} = \int_M \| u\|^2_g \dvol + \int_M \frac{1}{4}\on{div}_g(u)^2 \dvol\,.
\end{equation}
For a given right-invariant Lagrangian $\mathcal{L}(\varphi,\dot{\varphi})$ on $\on{Diff}(M)$, one can consider the reduced Lagrangian $l(u) = \mathcal{L}(\on{Id},u)$, where $\on{Id}$ is the identity map on $M$. The Euler-Lagrange equation for a Lagrangian $l(u)$ is called Euler-Arnold or Euler-Poincar\'e equation and it reads
\begin{equation}\label{eq:eulerpoincare}
\frac{\ud}{\ud t} \frac{\delta l} {\delta u} + \ad^*_u  \frac{\delta l} {\delta u} = 0\,,
\end{equation}
where ${\delta l}/{\delta u}$ represents a momentum density \cite{holm2005momentum}. 
Taking $l(u) = \|u\|_{H^{\on{div}}}^2$, this equation describes $H^{\on{div}}$ geodesics on $\on{Diff}(M)$. When $M$ is one dimensional with the Lebesgue measure, it coincides with the Camassa-Holm equation. Note that the Ebin and Marsden framework that proves local well-posedness does not apply in dimension greater than one, since the differential operator associated with the $H^{\on{div}}$ metric is not elliptic. Local well-posedness has been proven in some particular cases, for instance the Euclidean space in \cite{Michor2013} and probably holds in a more general setting such as a closed Riemannian manifold.
\par
The key point to embed the $H^{\on{div}}$ geodesic flow in an incompressible Euler equation consists in viewing $(\on{Diff}(M),H^{\on{div}})$ as an isotropy subgroup isometrically embedded in a larger group on which there is a (non-invariant) $L^2$ metric. 
Such a situation is well-known for the incompressible Euler equation. Indeed, on $\on{Diff}(\R^d)$ it is possible to consider the flat $L^2$ metric with respect to a given volume form, and the subgroup of volume preserving diffeomorphisms $\on{SDiff}(\R^d)$ endowed with this $L^2$ metric. On $\on{SDiff}(\R^d)$, the $L^2$ metric is now right-invariant, see \cite{KhesinGafa} for more details. 
\par
Let us consider the automorphism group of half-densities on $M$, which is a trivial principal bundle once a reference volume measure, in this case $\dvol$, has been chosen. In such a trivialization, the half-densities fiber bundle is $M \times \R_{>0}$ and its automorphism group $\on{Aut}(M \times \R_{>0})$ can be identified with the semi-direct product of groups $\on{Diff}(M)\ltimes C^{\infty}(M,\R_{>0})$ acting on the left on the space of densities by,
\begin{equation}\label{EqLeftAction}
(\varphi,\lambda) \cdot \rho \eqdef \varphi_*(\lambda^2 \rho)\,,
\end{equation}
where $\rho \in \on{Dens}(M)$ is a density and $(\varphi,\lambda) \in \on{Diff}(M)\ltimes C^{\infty}(M,\R_{>0})$.
Note that the semi-direct group composition law is completely defined by the fact that \eqref{EqLeftAction} is a left action. Consider the isotropy subgroup associated with the constant uniform density, that is the subgroup $\on{Aut}_{\vol}(M\times \R_{>0}) = \{ (\varphi,\sqrt{\on{Jac}(\varphi)})  \, | \, \varphi \in \on{Diff}(M) \}$. Mimicking the case of incompressible Euler, we are looking for a right-invariant metric on $\on{Aut}_{\vol}(M\times \R_{>0})$, thus it is completely defined at identity. The derivative of a curve in $\on{Aut}_{\vol}(M\times \R_{>0})$ at identity is $(u,\on{div}_g(u)/2)$ and the $H^{\on{div}}$ metric in \eqref{eq:hdivmetric} is the $L^2$ metric on the product space. We extend this metric on the whole automorphism group
by writing $(u,\on{div}(u)/2) = (\dot{\varphi} \circ \varphi^{-1},\frac{\dot{\lambda}}{\lambda} \circ \varphi^{-1})$. Then the $H^{\on{div}}$ norm yields the following Lagrangian on $\on{Diff}(M)\ltimes C^{\infty}(M,\R_{>0})$
\begin{equation}\label{EqConeMetric}
\mathcal{L}((\varphi,\lambda),(\dot{\varphi},\dot{\lambda}) ) = \int_M (\lambda^2 \|\dot{\varphi}\|^2_{g\circ\varphi} + \dot{\lambda}^2 ) \dvol\,.
\end{equation}
This formula defines a (non right-invariant) $L^2$ metric on $\on{Diff}(M)\ltimes C^{\infty}(M,\R_{>0})$ which satisfies our requirements. Note that this semi-direct product is included as a set in the space of maps from $M$ into $M\times \R_{>0}$ and that the $L^2$ metric is defined by the volume form $\vol$ on $M$ and the Riemannian metric, given by formula \eqref{EqConeMetric}, $g_0 = r^2g + (\ud r)^2$, which is a cone metric. This cone metric is a particular case of a warped metric (see Appendix \ref{SecWarpedMetrics}), which we will use again in the next section. This metric has an important property: the geodesic flow is preserved by every positive scaling in the $r$ direction.
\par
At this point, we have $\on{Aut}_{\vol}(M\times \R_{>0}) \subset \on{Aut}(M\times \R_{>0})$, which is naturally embedded in  $ \on{Diff}(M \times \R_{>0})$ by the map $(\varphi,\lambda) \to [(x,r) \mapsto (\varphi(x),\lambda(x)r)]$.
Due to the fact that scalings are affine isometries for the cone metric, this embedding preserves geodesics. Thus, for every volume form $\nu$ on $\R_{>0}$ such that $\int_0^\infty r^2 \ud \nu(r) = 1$, the embedding $\on{Aut}(M\times \R_{>0}) \hookrightarrow \on{Diff}(M \times \R_{>0})$ is an isometry for the $L^2$ metric with respect to the volume form $\vol \wedge \, \nu$ on $M \times \R_{>0}$.
\par
In order to conclude, it suffices to check that $\on{Aut}_{\vol}(M\times \R_{>0})$, as a subset of $\on{Diff}(M \times \R_{>0})$, preserves (by pushforward) a density on $M \times \R_{>0}$. A direct computation shows that it is the case for $\rho_0(x,r) = \frac{1}{r^{3+d}} \dvol(x,r) = \frac 1{r^3} \ud r \dvol(x)$, where $d$ denotes the dimension of $M$. Note that $\rho_0$ is not integrable at $0$ and it has infinite mass. 
Importantly, the embedding $\on{Aut}_{\vol}(M\times \R_{>0})  \hookrightarrow \on{SDiff}_{\rho_0}(M \times \R_{>0})$, the subgroup of diffeomorphisms that preserve $\rho_0$, is an isometry. This can be summarized by the following diagram,
\SelectTips{eu}{}     
\setlength{\fboxsep}{-1pt} 
\begin{center}
{\color{white}\framebox{{\color{black}$$ 
 
\xymatrix@=5pt{ 
&&  \ar@{<-_{)}}[dd] \, (\on{Aut}(M\times \R_{>0}),L^2_{\vol,g_0}) \,\ar@{^{(}->}[rr]^{Isom.}  && (\on{Diff}(M \times \R_{>0}),L^2_{\vol \wedge \, \nu,g_0})\, \ar@{<-_{)}}[dd]\,  \\
 && \\
(\on{Diff}(M),H^{\on{div}}) \ar@{->}[rr]^(.4){Isom.} && (\on{Aut}_{\vol}(M\times \R_{>0}),L^2_{\vol,g_0})  \ar@{^{(}->}[rr]^{Isom.}
 && (\on{SDiff}_{\rho_0}(M \times \R_{>0}),L^2_{\vol \wedge \, \nu,g_0})\,.
}
$$
}}} 
\end{center}
At this point, we have reformulated the geodesic flow on $\on{Diff}(M)$ for the $H^{\on{div}}$ metric as a geodesic flow on a Riemannian submanifold of $(\on{Aut}(M\times \R_{>0}),L^2_{\vol,g_0})$, which is embedded in the geodesic flow of $\on{SDiff}_{\rho_0}(M \times \R_{>0})$ for an $L^2$ metric with a volume form which  differs from $\rho_0$. 
\par
This is the result formulated in Theorem \ref{ThEmbedSimple}. Alternatively, we give an Eulerian derivation of the result in Section \ref{SecEulerianViewPoint} and we also give an elementary computational proof in Appendix \ref{SecCHComputational}, which addresses the one dimensional case, for simplicity.

\subsection{A few comments on this embedding}

The blow-up of the CH equation corresponds to a Jacobian $\partial_x \varphi$ which vanishes.
Let us explain it briefly. For a standard peakon-antipeakon collision, that is two peakons moving toward each other at the same speed or more generally a antisymmetric peakon configuration, it is possible to prove that the middle point of the configuration will be fixed by the flow and $\partial_x v \to -\infty$. Using the flow equation $\partial_{tx} \varphi = \partial_x v \circ \varphi \, \partial_x \varphi$ at this middle point implies $\partial_x \varphi \to 0$. This is shown in Figure \ref{FigCollision}, in which are plotted the curves $\theta \mapsto r \sqrt{\partial_\theta \varphi(\theta) }e^{i\varphi(\theta)}$ for two different choices of the radius $r$, in blue and green.

\begin{figure}[!htb]
\minipage{0.50\textwidth}
  \includegraphics[width=\linewidth]{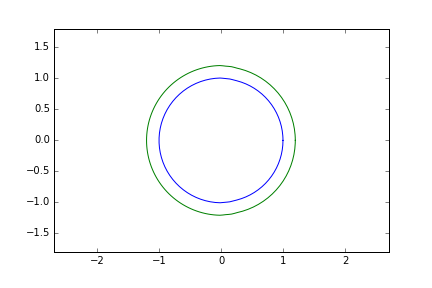}
\endminipage
\minipage{0.50\textwidth}
  \includegraphics[width=\linewidth]{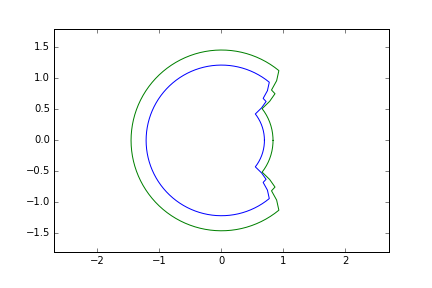}
\endminipage\vfill
\minipage{0.50\textwidth}%
  \includegraphics[width=\linewidth]{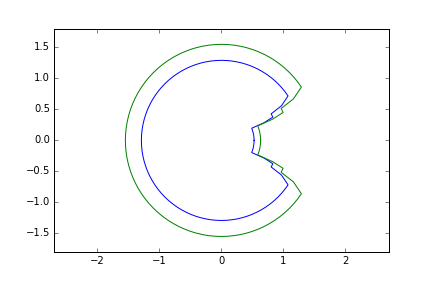}
\endminipage
\minipage{0.50\textwidth}%
  \includegraphics[width=\linewidth]{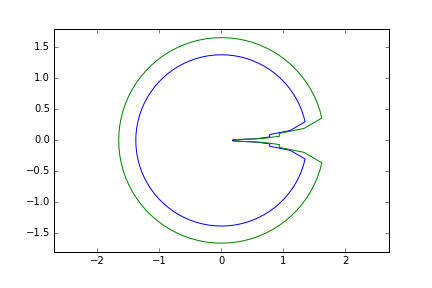}
\endminipage
\caption{A peakons-antipeakons collision represented in the new polar coordinates variables at different time points. The two curves in blue and green are scaled versions of each others: they represent the image under the map $\Psi(\theta,r) = r \sqrt{\partial_\theta \varphi(\theta)} e^{i\varphi(\theta)}$ of two circles on the complex plane with different radii.}
\label{FigCollision}
\end{figure}

\subsection{Applying Tao's change of metric}\label{SecTaosTrick}
In \cite{TaoUniversality}, Tao studied how to map the solutions of some ODEs as particular solutions of the incompressible Euler equation on a Riemannian manifold. As a corollary of his work, it is possible to embed the CH equation in incompressible Euler, at the expense of introducing a new dimension which is introduced to correct the coefficient $\frac{1}{r^{3+d}}$ on the preserved density $\rho_0$. Let us describe it now. Consider $(N,g)$ a Riemannian manifold and the incompressible Euler equation for a particular density $\rho_0$ preserved by the flow, that is
\begin{equation}\label{EqIEWithAnotherDensity}
\begin{cases}
\dot{u} + \nabla_u u = -\nabla p \\
\on{div}_g(\rho_0 u) = 0\,,
\end{cases}
\end{equation}
then, on $M \times S_1$ endowed with the metric $g + \rho_0^2 (\ud y)^2$ and defining $v = (u,0)$, one gets a (particular) solution to the incompressible Euler equation \eqref{EqEbinIE}.
Thus, we have
\begin{proposition}\label{ThCHIntoEuler}
Solutions to the $H^{\on{div}}$ Euler-Arnold equations are particular solutions to the incompressible Euler equation on $M \times \R_{>0} \times S_1$ for the metric $r^2g(x) + (\ud r)^2 +  {r^{-2(3+d)}} (\ud y)^2$, where $(x,r,y) \in M \times \R_{>0} \times S_1$.
\end{proposition}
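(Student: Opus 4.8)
The plan is to obtain Proposition \ref{ThCHIntoEuler} by composing the two embeddings already established in the paper: the geometric embedding of the $H^{\on{div}}$ geodesic flow into the incompressible Euler equation on the cone (Section \ref{SecHdiv}) and Tao's change of metric (the warping construction introduced around \eqref{EqIEWithAnotherDensity}). No new mechanism is needed; the whole content is to verify that the advected density produced by the first step is exactly the one fed into the second, so that the cone direction and the new $S_1$ direction assemble into the asserted metric.

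First I would invoke the construction of Section \ref{SecHdiv} (the general-manifold version of Theorem \ref{ThEmbedSimple}): a solution $u$ of the $H^{\on{div}}$ Euler--Arnold equation on $\on{Diff}(M)$ lifts, through the cone embedding $(\varphi,\lambda)\mapsto[(x,r)\mapsto(\varphi(x),\lambda(x)r)]$, to a solution of the incompressible Euler equation \eqref{EqIEWithAnotherDensity} on $N=M\times\R_{>0}$ equipped with the cone metric $g_0=r^2g+(\ud r)^2$, the advected density being $\rho_0=r^{-(3+d)}$ as a function relative to $\dvol_{g_0}$ (equivalently the form $\frac1{r^3}\ud r\,\dvol(x)$ computed there), with $d=\dim M$. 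The exponent $3+d$ is precisely the one arising in Section \ref{SecHdiv}, and for $d=1$ this recovers the density $r^{-4}$ of Theorem \ref{ThEmbedSimple}.

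Next I would apply Tao's change of metric to the triple $(N,g_0,\rho_0)$. This reduces to checking that, on $N\times S_1$ endowed with $\tilde g=g_0+\rho_0^2(\ud y)^2$ and with $v=(u,0)$ taken $y$-independent, the field $v$ solves the genuine incompressible Euler equation \eqref{EqEbinIE}. The substance is a short Christoffel computation using that $\tilde g$ is block-diagonal and $y$-independent, so $\partial_y$ is Killing: on one hand $\on{div}_{\tilde g}(v)=\rho_0^{-1}\on{div}_{g_0}(\rho_0 u)$, so the weighted constraint of \eqref{EqIEWithAnotherDensity} becomes the unweighted incompressibility for $\tilde g$; on the other hand $\nabla^{\tilde g}_v v=(\nabla^{g_0}_u u,0)$ with a $y$-independent pressure, so the momentum equation is unchanged. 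Substituting $\rho_0=r^{-(3+d)}$ gives $\tilde g=r^2g+(\ud r)^2+r^{-2(3+d)}(\ud y)^2$ on $M\times\R_{>0}\times S_1$, which is exactly the claimed metric, and the two embeddings then compose verbatim.

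The one point demanding care — and essentially the only obstacle — is the connection computation in the warped direction. One must confirm that the mixed symbols $\tilde\Gamma^y_{ij}$ and $\tilde\Gamma^k_{iy}$ vanish when contracted against $v=(u,0)$, so that no spurious $y$-component, and no back-reaction onto the base, is generated; the only nonzero warped symbols, $\tilde\Gamma^y_{iy}=\partial_i\rho_0/\rho_0$ and $\tilde\Gamma^k_{yy}=-\rho_0\,g_0^{kl}\partial_l\rho_0$, are annihilated because the $y$-component of $v$ is zero. I would also record, in line with the caveat at the end of Section \ref{SecTaosTrick}, that the factor $r^{-2(3+d)}$ degenerates as $r\to\infty$ and blows up as $r\to 0$, so the ambient manifold is curved and non-complete and the statement is to be understood for strong solutions.
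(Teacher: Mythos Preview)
Your proposal is correct and follows exactly the paper's approach: compose the cone embedding of Section \ref{SecHdiv} (producing the density $\rho_0=r^{-(3+d)}$ on $(M\times\R_{>0},g_0)$) with Tao's warping $g_0+\rho_0^2(\ud y)^2$ from \eqref{EqIEWithAnotherDensity}, yielding the stated metric. The paper states the warping step as a fact from \cite{TaoUniversality} without spelling out the Christoffel/divergence verification you include, so your write-up is if anything more detailed than the paper's own argument.
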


\begin{corollary}
The incompressible Euler equation \eqref{EqEbinIE} on the Riemannian manifold $S_1 \times \R_{>0} \times S_1$ has particular solutions that exhibit a finite time blow-up.
\end{corollary}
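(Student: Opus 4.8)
The plan is to specialize Proposition \ref{ThCHIntoEuler} to the base manifold $M = S_1$ (so that $d = 1$) and then transport a known finite-time singularity of the Camassa-Holm equation through the embedding. First I would recall that the $H^{\on{div}}$ Euler-Arnold equation on $\on{Diff}(S_1)$ is exactly the CH equation \eqref{Eq:CH}, and that CH admits smooth (strong) solutions that break down in finite time: for an antisymmetric peakon-antipeakon configuration the center of symmetry is fixed by the flow, and at that point $\partial_\theta u \to -\infty$ as $t \to T^-$, equivalently $\partial_\theta \varphi \to 0$. This is precisely the breakdown mechanism recalled in Section \ref{SecHdiv} and established in \cite{SurveyCH}.

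Next, applying Proposition \ref{ThCHIntoEuler} with $d = 1$, such a CH solution lifts to a solution of the incompressible Euler equation \eqref{EqEbinIE} on $S_1 \times \R_{>0} \times S_1$ equipped with the metric $r^2 g(x) + (\ud r)^2 + r^{-8}(\ud y)^2$, where $g$ is the standard metric on $S_1$. By Theorem \ref{ThEmbedSimple} followed by Tao's change of metric, the velocity field of this lifted solution is $v(\theta,r,y) = (u(t,\theta),\, r\,\partial_\theta u(t,\theta)/2,\, 0)$. Since all the maps involved in the embedding are smooth, this $v$ is a bona fide strong solution on $[0,T)$.

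The final step is to verify that the CH breakdown forces a genuine blow-up of the lifted solution. The middle component of $v$ equals $r\,\partial_\theta u/2$, and $\partial_\theta u \to -\infty$ at the fixed center point as $t \to T^-$; hence on any fixed compact neighbourhood of a point $(\theta_0,r_0,y_0)$ with $r_0 > 0$ the velocity field loses boundedness (and its gradient certainly does), so the strong solution cannot be continued smoothly past time $T$. Equivalently, the associated Lagrangian flow degenerates as $\partial_\theta \varphi \to 0$ and ceases to be a diffeomorphism. This exhibits the desired finite-time blow-up.

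I expect the only real obstacle to be bookkeeping about what \emph{blow-up} should mean on the non-compact curved manifold $S_1 \times \R_{>0} \times S_1$: because the $r$-direction is non-compact, the velocity is already unbounded as $r \to \infty$ even for smooth data, so the statement must be phrased in terms of a local quantity — for instance the supremum of $\|\nabla v\|_g$ over a fixed compact set, or the loss of the diffeomorphism property of the flow — rather than a global norm. Once blow-up is formulated locally, the inheritance from $\partial_\theta u \to -\infty$ is immediate and needs no new estimate, the content of the corollary being entirely carried by the explicit embedding.
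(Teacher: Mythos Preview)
Your proposal is correct and matches the paper's argument: the corollary is presented there without an explicit proof, being the immediate combination of Proposition~\ref{ThCHIntoEuler} specialized to $M=S_1$ (so $d=1$, giving the metric $r^2(\ud\theta)^2+(\ud r)^2+r^{-8}(\ud y)^2$) with the known finite-time breakdown of CH recalled in the introduction and in Section~\ref{SecHdiv}. Your additional discussion of what ``blow-up'' should mean on the non-compact manifold, and the explicit check that the $r\,\partial_\theta u/2$ component diverges on compact sets, is a welcome clarification that the paper itself does not spell out.
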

Note that this Riemannian manifold has nonpositive sectional curvature since the metric can be written in a warped product formulation $(\ud x)^2 + (\ud y)^2 + \frac{1}{(x^2 + y^2)^4} (\ud z)^2$ for which the formula \eqref{EqCurvature} for the sectional curvature in \cite{BishopWarped} can be applied.
Note that this Riemannian manifold is not complete since $0$ has to be added to the cone $S_1 \times \R_{>0}$ to make it a complete metric space. Indeed, with the cone point added, it is not a smooth manifold any longer and it has infinite curvature, at least formally, at this point.

\subsection{The Eulerian viewpoint} \label{SecEulerianViewPoint}

It is instructive to examine the relation between the $H^{\on{div}}$ and the Euler equations at the Eulerian level by comparing the relative Euler-Poincar\'e equations. We do this here using differential forms for our results to be valid on the manifold $M$; in particular we will make computations using the volume form $\vol$ rather than the measure $\dvol$.

The momentum equation for the incompressible Euler system in \eqref{EqEbinIE} can be written in terms of the velocity 1-form $v^\flat = g(v,\cdot)$ as follows
\begin{equation}\label{eq:eulerforms}
\dot{v}^\flat + \mathcal{L}_v {v^\flat} +\mathrm{d} \left[ p - \frac{1}{2} g(v,v)\right] = 0\, ,
\end{equation}
since $(\nabla_v v)^\flat = \nabla_v v^\flat = \mathcal{L}_v {v^\flat} -\mathrm{d} g(v,v)/2$. In equation  \eqref{eq:eulerforms}, $\ud$ is the exterior derivative and $\mathcal{L}_v$ is the Lie derivative with respect to $v$, which can be expressed using Cartan's formula as $\mathcal{L}_v = i_v \circ \ud + \ud \circ i_v$, where $i_v$ is the contraction operator with respect to $v$. 
Taking the exterior derivative of \eqref{eq:eulerforms}, we get the well-known advection equation for the vorticity 2-form $\omega = \mathrm{d} v^\flat$,
\begin{equation}
\dot{\omega} +  \mathcal{L}_v {\omega} =0\,.
\end{equation}
On the other hand, the Euler -Poincar\'e equation for the $H^{\on{div}}$ Lagrangian can be written as an advection equation for the momentum density
\begin{equation}\label{eq:advmomdens}
\dot{n} \otimes \mathrm{vol} + \mathcal{L}_u (n \otimes  \mathrm{vol}) =0\, ,
\end{equation}
where $n= u^\flat + \frac{1}{4} \mathrm{d} \delta u^\flat$,  which is equivalent to
\begin{equation}
\dot{n}  + \mathcal{L}_u n  - (\delta u^\flat) n =0\, .
\end{equation}
This is because the quantity ${\delta l}/{\delta u}$ in \eqref{eq:eulerpoincare} can be identified with the momentum density $n\otimes \vol$ in which case $\ad^*_u$ is just the Lie derivative operator \cite{holm2005momentum}. Note also that $\delta$ is the adjoint of $\ud$ with respect to inner product defined by $g$ and in particular $\delta u^\flat = -\on{div}_g u$. 
Consider now the vector field $w = (u, r \mathrm{div}_g u /2)$ on $M\times \mathbb{R}_{>0}$ with the cone metric. Then, 
\begin{equation}
w^\flat =  -\frac{1}{2} r \delta u^\flat \mathrm{d} r + r^2 u^\flat  =  - \frac{1}{4} \delta u^\flat \mathrm{d} r^2 + r^2 u^\flat
\end{equation}
where the metric operations applied to $u$ are computed with respect to $g$, whereas the $\flat$ operator applied to $w$ is computed with respect to the cone metric.
The associated vorticity two-form is
\begin{equation}\label{eq:vorticitych}
\begin{aligned}
\mathrm{d} w^\flat &=  - \frac{1}{4}\mathrm{d} \delta u^\flat \wedge \mathrm{d} r^2 +  \mathrm{d} (r^2 u^\flat) \\
&= \frac{1}{4}\mathrm{d}( r^2 \mathrm{d} \delta u^\flat) +  \mathrm{d} (r^2 u^\flat) \\
&= \mathrm{d} (r^2 n)\,.
\end{aligned}
\end{equation}
Even without specifying $n$, we find that whenever $n$ satisfies \eqref{eq:advmomdens}, the 1-form $r^2 n$ is passively advected by $w$. In fact,
\begin{equation}
\begin{aligned}
r^2 \dot{n} + \mathcal{L}_w (r^2 n ) &= r^2 \dot{n} + r^2 \mathcal{L}_w { n } + n \mathcal{L}_w { r^2 }\\
&= r^2 \dot{n} + r^2 \mathcal{L}_w { n } - n r^2 \delta u^\flat=0\,.
\end{aligned}
\end{equation}
This shows explicitly that by adding one dimension we are able to express advection of one form densities as regular advection using an appropriate lifting for 1-forms and vector fields. Then, because of the particular form of the Lagrangian for CH, we have that $\mathrm{d}(r^2 n) = \mathrm{d} w^\flat$ and therefore $w$ satisfies the Euler equation in rotational form. 

Note that in 1D the relation $ \mathrm{d} w^\flat = \mathrm{d}(r^2 n) $ becomes
\begin{equation}
\mathrm{d} w^\flat = 2 r  \mathrm{d} r \wedge n\,, 
\end{equation}
since $\mathrm{d} n =0$. In terms of standard vector calculus, this is equivalent to the observation that the scalar vorticity of the lifted vector field on the cone $w(x,r) = (u, r \partial_x u /2)$ is given by $\on{curl}(w) = 2u - \frac{1}{2}\partial_{xx}u$ which is twice the momentum of the CH equation $m = u- \frac 14 \partial_{xx} u$. Note that this curl does not depend on the $r$ variable.

The volume form preserved by the flow of the $H^{\on{div}}$ equation lifted to the cone is not the one relative to the cone metric. In fact, if this were preserved, its Lie derivative would then be zero; However, denoting by $d$ the dimension of $M$,
\begin{equation}
\begin{aligned}
 \mathcal{L}_{w}(  \mathrm{d} r^{d+1} \wedge \mathrm{vol}) &= \mathcal{L}_{w}(   \mathrm{d} r^{d+1}) \wedge \mathrm{vol} +    \mathrm{d} r^{d+1} \wedge \mathcal{L}_{w}(\mathrm{vol})\\
 &= -\frac{d+3}{2} \,\delta u^\flat\,  \mathrm{d} r^{d+1} \wedge \mathrm{vol} \,.\\
 \end{aligned}
 \end{equation}
On the other hand
\begin{equation}
\begin{aligned}
\mathcal{L}_{w}( r^{-4} \mathrm{d} r^2 \wedge \mathrm{vol}) &= \mathcal{L}_{w}(  r^{-4} \mathrm{d} r^2) \wedge \mathrm{vol} +   r^{-4} \mathrm{d} r^2 \wedge \mathcal{L}_{w}(\mathrm{vol})\\
&= -\mathrm{d}( r^{-2}  {\delta u^\flat}) \wedge \mathrm{vol} - \delta u^\flat\, r^{-4} \mathrm{d} r^2 \wedge \mathrm{vol}=0 \,.\\
\end{aligned}
\end{equation}
From the previous section, we know that this discrepancy (between the metric of the momentum equation and the volume form preserved by the flow) can be fixed by adding an extra dimension and choosing as metric $r^2 g + (\mathrm{d} r)^2 + r^{-2(3+d)} (\mathrm{d} y)^2$ so that the associated volume form is $r^{-3} \mathrm{d} r \wedge \mathrm{vol} \wedge  \mathrm{d}  y$.  Then,  $\tilde{w} = ( w,0)$ still satisfies the vorticity advection equation on the new manifold and now we also have that 
\begin{equation}
\mathcal{L}_{\tilde{w}} (r^{-4} \mathrm{d} r^2 \wedge \mathrm{vol} \wedge  \mathrm{d}  y) =  \mathcal{L}_{w} (r^{-4} \mathrm{d} r^2 \wedge \mathrm{vol}) \wedge  \mathrm{d}  y = 0\,.
\end{equation}


\section{Embedding the CH2 equations into CH}\label{SecCH2embedding}

This section, based on the two previous ones, contains our main result which consists in embedding the CH2 equations into the CH equation in higher dimension. As a consequence, the CH2 equations can be embedded in incompressible Euler as well.
\par
Let us recall that the CH2 equations are an extension of the CH equation to take into account the free surface elevation in its shallow-water interpretation, while preserving integrability properties. The CH2 equations read
\begin{equation}\label{EqCH2}
\begin{cases}
\partial_t m + um_x + 2mu_x = -g \rho \partial_x \rho \\
m = u - \partial_{xx}u\\
\partial_t \rho + \partial_{x}(\rho u) = 0\,,
\end{cases}
\end{equation}
where $\rho(t = 0)$ is a positive density and $g$ is a positive constant.

There are at least two different point of views to introduce these equations. They can be viewed as a geodesic flow on a semi-direct product of groups $\on{Diff}(S_1) \ltimes C^\infty(S_1,\mathbb{R})$, with the additive group structure on $C^\infty(S_1,\mathbb{R})$, where the metric on the tangent space at identity is the $H^1$ metric on the first factor and the $L^2$ metric on the second. Alternatively, they can be obtained by adding a potential term depending on an advected density. 
\subsection{Embedding using the potential term}
Hereafter, we use the point of view of potential flow developed in Section \ref{SecTao} to obtain the embedding. Consider a Lagrangian $l(u)$ defined on Eulerian velocity fields $u$ on the manifold $M$. We add to this a potential term depending on an advected density. In other words, we consider the Lagrangian
\begin{equation}
l_\rho(u)  = l(u) - F(\rho)\,,
\end{equation}
where $F$ is a functional defined on the space of densities, and we take variations under the constraint of the continuity equation $\partial_t \rho + \on{div}_g(\rho u) = 0$. Then, the Euler-Poincaré equations including the advected quantity $\rho$ read
\begin{equation}\label{EqGeneralSystemEPWithPotential}
\begin{cases}
\displaystyle  \frac{\ud}{\ud t} \frac{\delta l}{\delta u}  + \on{ad}_u^*\frac{\delta l}{\delta u} = - \rho \nabla \frac{\delta F}{\delta \rho} \otimes \dvol\,,\\
\partial_t \rho + \on{div}_g(\rho u) = 0\,.
\end{cases}
\end{equation}
The CH2 equations are a particular case of System \eqref{EqGeneralSystemEPWithPotential} when  $l(u) = \|u\|^2_{H^{\on{div}}}$,  $F(\rho) = \frac{1}{2}\int_{M} \rho^2(x) \dvol(x)$ and $M$ is one dimensional. In the following, however, we will not restrict to the one dimensional case and we will refer to such a slight generalization of System \eqref{EqCH2}  as $H^{\on{div}}$2 equations.
\par
Let us write $F(\rho)$ in terms of $(\varphi,\lambda)$ introduced in Section \ref{SecHdiv},
\begin{align*}
F(\rho) =  \frac{1}{2} \int_{S_1} \on{Jac}(\varphi^{-1})^2 (\rho_0 \circ \varphi^{-1}) ^2\dvol &=  \frac{1}{2} \int_{S_1}  \frac{\rho_0^2}{\lambda^4} \circ \varphi^{-1} \dvol\\
&=  \frac{1}{2} \int_{S_1}  \frac{\rho_0^2}{\lambda^2}  \dvol\,.
\end{align*}
To obtain the last formula, we used the constraint $\varphi_*(\lambda^2 \dvol(x)) = \dvol(x)$.
Therefore, the CH2 equations are the Euler-Lagrange equations associated with the following Lagrangian 
\begin{equation}\label{EqCH2Cone}
\mathcal{L}((\varphi,\lambda),(\dot{\varphi},\dot{\lambda})) = \frac{1}{2} \int_M \lambda^2(x)\|\dot{\varphi}(x)\|_{(g\circ\varphi)(x)}^2 + \dot{\lambda}(x)^2 \, \dvol(x) -  \frac{1}{2} \int_{S_1}  \frac{\rho_0^2(x)}{\lambda^2(x)}  \dvol(x)\,,
\end{equation}
under the constraint $\varphi_*(\lambda^2 \dvol(x)) = \dvol(x)$.
This formula is similar to the Lagrangian \eqref{EqPotentialFluidDynamic} and we have
\begin{theorem}
The $H^{\on{div}}$2 equations on $M$ can be embedded in the $H^{\on{div}}$ geodesic flow on $M \times S_1$. More precisely, the solutions to the $H^{\on{div}}$2 equations can be mapped to particular solutions of the $H^{\on{div}}(M\times S_1)$ geodesic flow. 
\end{theorem}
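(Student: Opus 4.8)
The plan is to recognize the Lagrangian \eqref{EqCH2Cone} as an instance of the potential fluid dynamic Lagrangian \eqref{EqPotentialFluidDynamic}, but posed on the cone $(M \times \R_{>0}, g_0)$ rather than on $M$, and then to remove the potential by the Eisenhart lift of Proposition \ref{ThEisenhart}, checking that the resulting kinetic metric is exactly the cone metric over $M \times S_1$. First I would read \eqref{EqCH2Cone} in the automorphism variables $(\varphi,\lambda)$ of Section \ref{SecHdiv}. Its kinetic part $\frac12\int_M (\lambda^2\|\dot{\varphi}\|^2_{g\circ\varphi} + \dot{\lambda}^2)\dvol$ is, by construction, the $L^2$ energy of the cone map $\psi = (\varphi,\lambda)\colon M \to M \times \R_{>0}$ for $g_0 = r^2 g + (\ud r)^2$, and it coincides with the $H^{\on{div}}$ kinetic energy on $\on{Diff}(M)$. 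Its potential part $\frac12\int_M \rho_0^2/\lambda^2\,\dvol$ is of the form $\int_M V(\psi)\,\rho_{\rm ref}\,\dvol$ for the \emph{fixed} function $V(x,r) = \tfrac{1}{2r^2}$ on the cone and the reference density $\rho_{\rm ref} = \rho_0^2$, the multiplicative constant being irrelevant by the remark following Proposition \ref{ThEisenhart}. Thus \eqref{EqCH2Cone} is \eqref{EqPotentialFluidDynamic} on the cone, under the incompressibility constraint $\varphi_*(\lambda^2\dvol) = \dvol$ that defines the automorphism subgroup of Section \ref{SecHdiv}.

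Second, I would apply Proposition \ref{ThEisenhart} to this data, with base manifold the cone and potential $V = \tfrac{1}{2r^2}$. The lift introduces a coordinate $y \in S_1$ together with the warped term $\tfrac{1}{2V}(\ud y)^2 = r^2(\ud y)^2$, yielding on $M\times\R_{>0}\times S_1$ the metric
\[
g_0 + r^2(\ud y)^2 = r^2\big(g + (\ud y)^2\big) + (\ud r)^2,
\]
which is exactly the cone metric over $N \eqdef M \times S_1$ for the product base metric $g_N = g + (\ud y)^2$. In its fluid form (the passage following Proposition \ref{ThEisenhart} in Section \ref{SecTao}), the lift turns the potential dynamics into a pure geodesic flow on this cone, with the extra velocity $\dot y$ depending, at each time, only on the $M$-variable $x$; consequently the lifted map acts on $N$ as $(x,y)\mapsto(\varphi(x),\,y + \theta(t,x))$, carrying the radial factor $\lambda(x)$.

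Third, I would invoke the embedding of Section \ref{SecHdiv}, now applied to $N = M \times S_1$: geodesics of the $L^2$ cone metric over $N$, restricted to the automorphism subgroup $\on{Aut}_{\vol}(N\times\R_{>0})$, are precisely the $H^{\on{div}}(N)$ geodesics. The lifted configuration is such an automorphism, namely $\Phi(x,y) = (\varphi(x),\, y+\theta(t,x))$ with radial factor $\Lambda(x,y) = \lambda(x)$, so the assignment
\[
(\varphi,\lambda)\ \longmapsto\ (\Phi,\Lambda),\qquad \Phi(x,y)=(\varphi(x),\,y+\theta(t,x)),\quad \Lambda(x,y)=\lambda(x),
\]
sends solutions of the $H^{\on{div}}$2 equations on $M$ to the ($y$-translation-invariant) particular solutions of the $H^{\on{div}}$ geodesic flow on $M\times S_1$, which is the claim.

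The step I expect to require the most care is the matching of the two incompressibility constraints and the verification that the lifted configuration is a genuine cone automorphism of $N\times\R_{>0}$. Concretely one must check $\Phi_*(\Lambda^2\dvol_N) = \dvol_N$: since $\Phi$ acts as $\varphi$ on $M$ and as the $x$-dependent rotation $y\mapsto y+\theta(t,x)$ on $S_1$, which preserves $\ud y$, and since $\Lambda^2\dvol_N = \lambda^2(x)\,\dvol_M\wedge\ud y$, one obtains $\Phi_*(\lambda^2\dvol_M\wedge\ud y) = \varphi_*(\lambda^2\dvol_M)\wedge\ud y = \dvol_M\wedge\ud y = \dvol_N$, using exactly the $M$-constraint $\varphi_*(\lambda^2\dvol_M)=\dvol_M$. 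One must also confirm that the conserved $y$-momentum $n$ reproduces the advected density $\rho_0$ with the correct normalization, which is where the gravity constant $g$ of \eqref{EqCH2} gets absorbed, again by the remark following Proposition \ref{ThEisenhart}.
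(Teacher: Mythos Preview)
Your proposal is correct and follows essentially the same route as the paper: both read the Lagrangian \eqref{EqCH2Cone} as a potential dynamic on the cone $(M\times\R_{>0},g_0)$ with $V(x,r)=\tfrac{1}{2r^2}$ (the particle-dependent factor $\rho_0^2(x)$ being absorbed into the conserved $y$-momentum, as you note), apply the Eisenhart lift of Proposition~\ref{ThEisenhart} to obtain the cone metric $r^2(g+(\ud y)^2)+(\ud r)^2$ over $N=M\times S_1$, and then identify the resulting constrained kinetic flow with the $H^{\on{div}}(N)$ geodesic flow via Section~\ref{SecHdiv}. Your incompressibility check $\Phi_*(\Lambda^2\dvol_N)=\dvol_N$ is exactly the one in the paper, and the paper's concluding computation $\dot\beta=\rho_0/\lambda^2$ is precisely your ``conserved $y$-momentum reproduces $\rho_0$'' step.
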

\begin{proof}
The Euler-Lagrange equation associated with \eqref{EqCH2Cone} reads
\begin{equation}
\frac{D}{Dt} (\dot{\varphi},\dot{\lambda}) = - \nabla \Phi + (0,\frac{2\rho_0}{\lambda^3})\,,
\end{equation}
where $\Phi: M \mapsto \R$ is the pressure term, and it can be rewritten using Proposition \ref{ThEisenhart} as
\begin{equation}
\frac{\tilde{D}}{\tilde{D}t} (\dot{\varphi},\dot{\beta},\dot{\lambda}) = - \nabla (\Phi,0)\,,
\end{equation}
where $\frac{\tilde{D}}{\tilde{D}t} $ is the covariant derivative associated with the metric $r^2 \tilde{g} + (\ud r)^2$, where $\tilde{g} = g+ (\ud y)^2$, on $M \times S_1 \times \R_{>0}$.
It is a particular form of the Euler-Lagrange equation for the following Lagrangian
\begin{equation}\label{eq:lagCHbeta}
\mathcal{L}((\tilde{\varphi},\tilde{\lambda}),(\dot{\tilde{\varphi}},\dot{\tilde{\lambda}})) = \frac{1}{2} \int_{M\times S_1} \tilde{\lambda}^2(x,y) \|\dot{\tilde{\varphi}}(x,y)\|_{(\tilde{g}\circ\tilde{\varphi})(x,y)}^2+ \dot{\tilde{\lambda}}(x,y)^2 \, \dvol(x) \otimes \ud y\,,
\end{equation}
where $\tilde{\varphi} = (\varphi, \beta)$ and 
\begin{equation}
\|\dot{\tilde{\varphi}}(x,y)\|_{(\tilde{g}\circ\tilde{\varphi})(x,y)}^2 = \|\dot{{\varphi}}(x,y)\|_{({g}\circ{\varphi})(x,y)}^2 + |\dot{\beta}(x,y)|^2\,.
\end{equation}
In particular, one needs to choose $\varphi(x,y) = \varphi(x)$ and $\beta(x,y) = \beta(x) + y$ and $\lambda(x,y) = \lambda(x)$. Note that for this choice of $(\varphi,\beta,\lambda)$, one has $(\varphi,\beta)_*(\lambda^2 \dvol(x) \otimes  \ud y) = \dvol(x) \otimes  \ud y$ since 
$\varphi_*(\lambda^2 \dvol(x)) = \dvol(x)$ and $\beta$ is a rotation in the $y$ variable. Hence, the Lagrangian in \eqref{eq:lagCHbeta}  corresponds to the $H^{\on{div}}$ action on $M \times S_1$, which completes the proof.

In  order to make explicit the relation between $\beta$ and $\rho_0$, observe that since the constraint is satisfied independently of the form of $\beta$, we can get its evolution equation simply by taking variations in the Lagrangian. Hence, we find that we must have
\begin{equation}\label{eq:beta}
\dot{\beta}(x) = \frac{C(x)}{\lambda^2(x)}\,,
\end{equation}
where $C(x):M\rightarrow \mathbb{R}$ is a given function. This can be determined by comparing the evolution equation for $\lambda$ for the Lagrangians in \eqref{EqCH2Cone} and \eqref{eq:lagCHbeta}. In particular, we easily find that we must set $C(x) = \rho_0(x)$.
\end{proof}
The expression for the Lagrangian in Equation \eqref{EqCH2Cone} suggests that the embedding can alternatively be derived by formulating CH2 as a geodesic flow on a semidirect product of groups.
\subsection{Embedding using semidirect product of groups}
The previous result shows that the subgroup $\on{Diff}(M) \ltimes C^\infty(M,S_1)$ is totally geodesic in $\on{Diff}(M \times S_1)$ endowed with the $H^{\on{div}}$ metric. The CH2 equations can be derived as a geodesic flow on the group $\on{Diff}(M) \ltimes C^\infty(M,S_1)$ for the right-invariant metric defined by $\| u \|_{H^{\on{div}}}^2 + \| \alpha \|^2_{L^2(M)}$, where $(u,\alpha)$ is an element of the tangent space at identity. Let us choose the group law to be $(\varphi,f) \cdot (\psi,g) = (\varphi \circ \psi, f \circ \psi + g)$, where $S_1 = \R / \mathbb{Z}$. Writing this right-invariant metric in Lagrangian coordinates gives
\begin{equation}\label{EqLagrangianCH2}
\begin{aligned}
\mc{L}((\varphi,f),(\dot{\varphi},\dot{f})) = &
\int_M \on{Jac}(\varphi) \| \dot{\varphi} \|_{g\circ{\varphi}}^2 + \left( \frac{\partial_t \sqrt{\Jac(\varphi)}}{ \sqrt{\Jac(\varphi)}} \right)^2 \on{Jac}(\varphi) \dvol \\& + \int | \dot{f}|^2 \on{Jac}(\varphi) \dvol  \,.
\end{aligned}
\end{equation}
The first and last terms can be grouped together to give $\int_M \on{Jac}(\varphi) | (\dot{\varphi},\dot{f}) |^2 \dvol$ where $(\dot{\varphi},\dot{f})$ is an element of the tangent space at $({\varphi},{f})\in\on{Diff}(M) \ltimes C^\infty(M,S_1)$. Importantly, using the additive group structure of $S_1$, $(\varphi,f)$ is naturally identified with $(x,y) \mapsto (\varphi(x),y + f(x))$ which we still denote by $(\varphi,f)$ with a little abuse of notation.
Obviously, we have $\on{Jac}(\varphi,f) = \on{Jac}(\varphi)$ and thus, denoting $\tilde{g} = g+ (\ud y)^2$, one can write Formula \eqref{EqLagrangianCH2} as
\begin{equation}
\begin{aligned}
\mc{L}((\varphi,f),(\dot{\varphi},\dot{f})) = &
\int_{M \times S_1} \on{Jac}(\varphi,f) \| (\dot{\varphi},\dot{f}) \|^2_{\tilde{g}\circ(\varphi,f)}  \dvol \otimes \ud y \\
& + \int_{M \times S_1} \left( \frac{\partial_t \sqrt{\Jac(\varphi,f)}}{ \sqrt{ \Jac(\varphi,f)}} \right)^2 \on{Jac}(\varphi,f) \dvol \otimes \ud y  \,,
\end{aligned}
\end{equation}
which is once again a particular form of the Lagrangian in \eqref{EqConeMetric} but on the automorpshim group $\on{Aut}_{\vol \wedge \ud y} ((M\times S_1) \times \R_{>0})$ (once the isotropy subgroup relation is made explicit).
It thus shows that the inclusion $\on{Diff}(M) \ltimes C^\infty(M,S_1)  \overset{Isom.}{\hookrightarrow} \on{Diff(M \times S_1)}$ is an isometry, where the latter group is endowed with the $H^{\on{div}}$ metric. However, to realize that it is a totally geodesic subgroup, we need to write the metric on the automorphism group $\on{Aut((M  \times S_1) \times \R_{>0}})$ following the point of view of Section \ref{SecHdiv} in order to get back to formulation \eqref{eq:lagCHbeta}.
\par
Coming back to the embedding into the incompressible Euler equation, we have
\begin{corollary}
The CH2 equations on $S_1$ can be embedded in the incompressible Euler equation on $S_1 \times \R_{>0} \times S_1 \times S_1$ with the metric $r^2 (\ud \theta)^2 + (\ud r)^2 + r^2(\ud y)^2 + \frac{1}{r^{10}} (\ud z)^2$.
\end{corollary}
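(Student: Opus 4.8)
The plan is to obtain this corollary by simply composing the two embeddings already at our disposal: the theorem immediately above, which sends CH2 on $S_1$ into the $H^{\on{div}}$ geodesic flow on $S_1 \times S_1$, and Proposition \ref{ThCHIntoEuler}, which sends any $H^{\on{div}}$ flow into incompressible Euler after adjoining one cone dimension and one Eisenhart-type correction dimension. The entire content of the proof is therefore the correct bookkeeping of dimensions when these two steps are chained, so I would present it as a short deduction rather than a fresh computation.

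First I would invoke the preceding theorem with $M = S_1$. This maps solutions of the CH2 (equivalently, $H^{\on{div}}$2) equations on $S_1$ to particular solutions of the $H^{\on{div}}$ geodesic flow on the product manifold $N \eqdef S_1 \times S_1$, whose relevant metric, in the notation of that theorem, is $\tilde g = g + (\ud y)^2 = (\ud\theta)^2 + (\ud y)^2$, the flat torus metric. Thus after this first step the dynamics is genuinely an $H^{\on{div}}$ geodesic flow on a \emph{two}-dimensional base.

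Next I would feed this $H^{\on{div}}$ flow on $N$ into Proposition \ref{ThCHIntoEuler}. The one place where care is needed is that the relevant dimension is now $d' = \dim N = 2$, and not the original $\dim S_1 = 1$, precisely because the flow being embedded at this second stage lives on $N$. Proposition \ref{ThCHIntoEuler} then yields a particular solution of the incompressible Euler equation \eqref{EqEbinIE} on $N \times \R_{>0} \times S_1$ for the metric $r^2 g_N + (\ud r)^2 + r^{-2(3+d')}(\ud z)^2$. Substituting $g_N = (\ud\theta)^2 + (\ud y)^2$ and $d' = 2$ gives $r^2(\ud\theta)^2 + r^2(\ud y)^2 + (\ud r)^2 + r^{-10}(\ud z)^2$, and relabelling the four factors as $S_1 \times \R_{>0} \times S_1 \times S_1$ reproduces exactly the asserted metric $r^2 (\ud\theta)^2 + (\ud r)^2 + r^2(\ud y)^2 + \frac{1}{r^{10}}(\ud z)^2$.

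The composite of the two solution maps is then the desired embedding of CH2 into incompressible Euler. I expect the only real obstacle — and the sole point deserving emphasis in the write-up — to be the exponent count: one must verify that the cone-density power is governed by the dimension of the intermediate manifold $N$ rather than that of the original base $S_1$, so that $3 + d' = 5$ and hence the correction term carries the weight $r^{-2\cdot 5} = r^{-10}$. Everything else is the verbatim content of the theorem and proposition already proved.
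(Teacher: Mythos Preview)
Your proposal is correct and matches the paper's (implicit) argument exactly: the corollary is stated without proof precisely because it follows by composing the theorem just proved (CH2 on $S_1$ $\hookrightarrow$ $H^{\on{div}}$ on $S_1\times S_1$) with Proposition~\ref{ThCHIntoEuler} applied to the two-dimensional base $N=S_1\times S_1$, giving the exponent $-2(3+2)=-10$. Your emphasis on tracking the dimension of the intermediate manifold $N$ rather than the original $S_1$ is the one point worth spelling out, and you have it right.
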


\subsection{The Eulerian viewpoint for $H^{\on{div}}$2} In order to give an Eulerian description of the embedding of $H^{\on{div}}$2 into $H^{\on{div}}$, we start by rewriting the system in \eqref{EqGeneralSystemEPWithPotential} in terms of differential forms. As before, we let $n = u^\flat + \frac{1}{4}\mathrm{d} \delta u^\flat$ so that
\begin{equation}
\begin{cases}
\dot{n} \otimes \mathrm{vol} + \mathcal{L}_u (n\otimes \mathrm{vol}) = - \rho \, \mathrm{d}\rho \otimes \mathrm{vol}\\
\dot{\rho} \, \mathrm{vol} + \mathcal{L}_u(\rho \,  \mathrm{vol}) = 0\,.
\end{cases}
\end{equation}
In view of equation \eqref{eq:beta}, we have that the couple $(u,\rho)$ can be lifted to a vector field $w$ on $M \times S_1$, given by
\begin{equation}
w = (u, \dot{\beta} \circ \varphi^{-1}) = \left( u, \frac{\rho_0}{\mathrm{Jac}(\varphi)} \circ \varphi^{-1} \right) = (u, \rho)
\end{equation}
satisfying the $H^{\on{div}}$ equation, once $M\times S_1$ is equipped with the metric $g + (\mathrm{d} y)^2$. Specifically, we have 
\begin{equation}
w^\flat = u^\flat + \rho \, \mathrm{d} y\,,
\end{equation}
and since $\rho$ is a function on $M$, $\delta w^\flat= \delta u^\flat$, where metric operations applied to $w$ are computed with respect to the metric on $M\times S_1$ whereas those applied to $u$ are computed with respect to the metric on $M$. In terms of vector fields, this just says that the divergence of $w$ on $M\times S_1$ is equal to that of $u$ on $M$ (and considered as a function on $M\times S_1$).
The momentum associated with $w^\flat$ is given by
\begin{equation}
\tilde{n} = w^\flat + \frac{1}{4} \mathrm{d} \delta w^\flat = n + \rho \, \mathrm{d} y \,.
\end{equation}
The embedding tells us that we should have
\begin{equation}
\dot{\tilde{n}}\otimes \tilde{\mathrm{vol}}+ \mathcal{L}_w (\tilde{n}\otimes \tilde{\mathrm{vol}}) = 0 \,, 
\end{equation} 
where $\tilde{\mathrm{vol}} = \mathrm{vol} \wedge \mathrm{d} y$, or equivalenty
\begin{equation}\label{eq:chlifted}
\dot{\tilde{n}}+ \mathcal{L}_w \tilde{n} - (\delta w^\flat) \tilde{n} = 0 \,, 
\end{equation} 
In order to see this, we rewrite equation \eqref{eq:chlifted} in terms of the lifts $\tilde{n} = n + \rho \,\mathrm{d} y$  and
$w = (u , \rho)$ under the only assumption that $\rho$ be independent of $y$.  
We have
\begin{equation}
\begin{aligned}
\mathcal{L}_{w} \tilde{n} &= \mathcal{L}_{w} {n} + \mathcal{L}_{w} \rho \mathrm{d} y \\
&= \mathcal{L}_{u} {n} + \mathrm{d} \rho^2 + i_{w} (\mathrm{d}\rho \wedge \mathrm{d} y)\\
&= \mathcal{L}_{u} {n} + \mathrm{d} \rho^2 - \rho \, \mathrm{d} \rho  +( i_{u} \mathrm{d}\rho) \,\mathrm{d} y \\
&= \mathcal{L}_{u} {n} + \rho \, \mathrm{d}  \rho  +( i_{u} \mathrm{d}\rho) \,\mathrm{d} y \,.
\end{aligned}
\end{equation}
Hence, provided that $(u,n,\rho)$ satisfy equation \eqref{eq:chlifted}, 
\begin{equation}
\begin{aligned}
\dot{\tilde{n}}+ \mathcal{L}_w \tilde{n} - (\delta w^\flat) \tilde{n}  &= \dot{\tilde{n}}+ \mathcal{L}_w \tilde{n} - (\delta u^\flat) \tilde{n} \\
&= \dot{{n}}+ \dot{\rho} \, \mathrm{d}y +  \mathcal{L}_{u} {n} + \rho\, \mathrm{d} \rho +( i_{u} \mathrm{d}\rho) \,\mathrm{d} y  - (\delta u^\flat)
 \tilde{n} \\
&= [\dot{{n}}+  \mathcal{L}_{u} {n} + \rho \, \mathrm{d} \rho - (\delta u^\flat) n] + [ \dot{\rho} \mathrm{d}y  + ( i_{u} \mathrm{d}\rho) \,\mathrm{d} y  - (\delta u^\flat) \rho \, \mathrm{d} y]\\
&=  [\dot{\rho}   + ( i_{u} \mathrm{d}\rho)  - (\delta u^\flat) \rho ] \mathrm{d} y\,.
\end{aligned}
\end{equation}
However, 
\begin{equation}
[\dot{\rho}   + ( i_{u} \mathrm{d}\rho)   - (\delta u^\flat) \rho ] \mathrm{vol} = 
\dot{\rho} \, \mathrm{vol} + \mathcal{L}_u(\rho \,  \mathrm{vol})  = 0\,,
\end{equation}
and therefore equation \eqref{eq:chlifted} holds. In other words, we are able to lift the momentum and velocity variables in a larger space so that these satisfy the $H^{\on{div}}$ equation. Moreover, we find that the connection between momentum and  velocity is invariant under this lift. In fact, we have
\begin{equation}
\tilde{n} =  w^\flat + \frac{1}{4} \mathrm{d} \delta w^\flat 
\end{equation}  
by construction, since this is how we found the lift for $n$.

\section{Further questions}

The question of embedding ODE or PDE equations into incompressible Euler, satisfying some natural requirements, might be a very soft one as indicated by \cite{TaoUniversality}. A natural question to ask is if it is possible to embed more general Euler-Poincaré-Arnold equations, for instance with higher-order norms rather than $H^{\on{div}}$. 
\par Another possible direction, which was partially addressed in this article, consists in strengthening the conditions on the embedding by requiring it to be, for example, isometric in some sense. Of course, this makes sense only when the underlying PDE comes from a variational principle. Let us show informally why this additional condition leads to a more constrained situation. An example of a right-invariant geodesic flow which would be not embeddable into incompressible Euler is the case of the $L^2$ right-invariant metric on $\on{Diff}(M)$. In this case, the induced distance is known to be degenerate \cite{Michor2005} whereas the distance on $\on{SDiff}(N)$ is obviously not degenerate when $N$ is a closed Riemannian manifold.
%
%
%
\section*{Acknowledgments}
The research leading to these results has received funding from the People Programme (Marie
Curie Actions) of the European Union’s Seventh Framework Programme (FP7/2007-2013)
under REA grant agreement n.
PCOFUND-GA-2013-609102, through the PRESTIGE
programme coordinated by Campus France.

The second author would like to thank Franck Sueur for mentioning a possible link between \cite{TaoUniversality} and \cite{VialardGallouetCH}.

\appendix

\section{An elementary proof of the embedding of CH into Euler}\label{SecCHComputational}
Recall again that we write formal computations on sufficiently regular solutions of the CH equation. Existence of such solutions $u(t)$ in $H^s$ when $s>3/2$ are guaranteed by Ebin-Marsden \cite{Ebin1970} until a possible blow-up time. Then, the associated flow $\varphi(t)$ is well defined as the solution of the ODE
\begin{equation}
\begin{cases}
\partial_t \varphi(t,x) = u(t,\varphi(t,x)) \\
\varphi(0,x) = x \text{  } \forall x \in S_1\,.
\end{cases}
\end{equation}
We now compute the Eulerian vector field $v$ associated with $\Psi$. In polar coordinates, we collect a few useful formulas:
\begin{align*}
&\Psi(\theta,r) = (\varphi(\theta),r\sqrt{\partial_x \varphi})\\
&\Psi^{-1}(\theta,r) = \left(\varphi^{-1}(\theta),r\frac{1}{\sqrt{\partial_x \varphi \circ \varphi^{-1}}}\right)\\
&\partial_t \Psi(\theta,r) = \left(\partial_t \varphi, \frac r2 \frac{\partial_{tx} \varphi}{\partial_x \varphi}\right)\\
& \partial_t \Psi \circ \Psi^{-1} = \left(u(\theta),\frac{r}{2} \partial_x u\right)
\end{align*}
Using the usual orthonormal basis on $\R^2$, $e_r = \frac{\partial}{\partial_r}$ and $e_\theta = \frac{1}{r}\frac{\partial}{\partial_\theta}$, we have
\begin{equation}
v = \frac{1}{2}r\partial_\theta u\, e_r + ru(\theta) \,e_\theta\,.
\end{equation}

\par
We write the divergence constraint $\nabla \cdot (\rho v) = 0$ explicitly.
One has
\begin{align*}
\frac{1}{r} \partial_r (r \rho u_r) + \frac{1}{r} \partial_\theta (\rho u_\theta) &= \frac{1}{r} \partial_r \left(\frac{1}{2r^2} \partial_\theta v\right) + \frac{1}{r^4} \partial_\theta v\\
& =-\frac{1}{r^4} \partial_\theta v + \frac{1}{r^4} \partial_\theta v = 0\,.
\end{align*}
\par
The incompressible Euler equation in polar coordinates reads
\begin{equation} \label{Eq:PolarEuler}
\begin{cases}
\partial_t v_r + v_r \partial_r v_r + \frac{v_\theta}{r} \partial_\theta v_r - \frac{1}{r} v_\theta^2= - \partial_r P \\
\partial_t v_\theta + v_r \partial_r v_\theta + \frac{v_\theta}{r} \partial_\theta v_\theta + \frac{1}{r} v_rv_\theta = -\frac{1}{r}\partial_\theta P\,.
\end{cases}
\end{equation}
We then check that these equations are satisfied for $v$ as defined above. In the following, we will use the notation $\partial_x$ for $\partial_\theta$. Although this notation is a bit abusive, it makes a clear difference between the vector field associated with $\varphi$ and the one associated with $\Psi$.
The first equation in System \eqref{Eq:PolarEuler} gives
\begin{align*}
&\frac r2 \partial_{tx} u + \frac r4 (\partial_x u)^2  + \frac r2 u \partial_{xx} u - r u^2 = -\partial_r P\\
&\frac{1}{2} \partial_{tx} u + \frac 14 (\partial_x u)^2  + \frac 12 u \partial_{xx} u -  u^2 = -\frac 1r \partial_r P\,.
\end{align*}
The second equation in System \eqref{Eq:PolarEuler} gives
\begin{align*}
& r \partial_{t} u + \frac{1}{2}ru \partial_x u   + r u\partial_x u + \frac 12 r u\partial_x u  = -\frac 1r \partial_r P\\
& \partial_{t} u + 2 \partial_x u u  = -\frac 1{r^2} \partial_\theta P \,.
\end{align*}
The Euler equations now read
\begin{equation}
\begin{cases}
\frac{1}{2} \partial_{tx} u + \frac 14 (\partial_x u)^2  + \frac 12 u \partial_{xx} u -  u^2 = -\frac 1r \partial_r P \\
\partial_{t} u + 2 \partial_x u u  = -\frac 1{r^2} \partial_\theta P\,.
\end{cases}
\end{equation}
Note that the left-hand side of the two equations only involves the variable $\theta$. It does not depend on $r$. In particular, it implies that $P$ is necessarily of the form
$P(\theta,r) = r^2 p(\theta)$. As a consequence, $-\frac 1r \partial_r P = -2p$ and $ -\frac 1{r^2} \partial_\theta P = -\partial_x p$. Thus, we obtain the following system,
\begin{equation}
\begin{cases}
\frac{1}{2} \partial_{tx} u + \frac 14 (\partial_x u)^2  + \frac 12 u \partial_{xx} u -  u^2 = -2p \\
\partial_{t} u + 2 \partial_x u u  = -\partial_x p \,.
\end{cases}
\end{equation}
This system is verified if and only if there exists $p$ such that the system above is satisfied. Therefore, since $p$ is given by the first equation, $p$ exists if and only if
\begin{equation}
\frac12 \partial_x\left( \frac{1}{2} \partial_{tx} u + \frac 14 (\partial_x u)^2  + \frac 12 u \partial_{xx} u -  u^2 \right) = \partial_{t} u + 2 \partial_x u u\,.
\end{equation}
After expanding all the terms, we get
\begin{equation}
\frac14 \partial_{txx} u + \frac 14 \partial_x u \partial_{xx} u  + \frac 14 \partial_x u \partial_{xx} u +  \frac 14 u \partial_{xxx} u-  u \partial_x u = \partial_{t} u + 2 \partial_x u u\,,
\end{equation}
which gives, after simplification, the Camassa-Holm equation \eqref{Eq:CH}. 
\par
As a last comment, the standard CH equation \eqref{StandardCH}
\begin{equation}
\partial_{t} u -  \partial_{txx} u +3 \partial_{x} u \,u  - 2\partial_{xx} u \,\partial_x u - \partial_{xxx} u \,u = 0\,.
\end{equation}
can be retrieved as a incompressible Euler geodesic flow on the cone for the metric $r^2(\ud \theta)^2 + 4 (\ud r)^2$. Other parameters in the CH equations can be retrieved by varying the parameters of the cone metric, that is, the angel of the cone.

\section{Warped metrics}\label{SecWarpedMetrics}

In this section, we collect the geodesic equations for a warped Riemannian metric and its curvature tensor.
\begin{definition}\label{ThWarped}
Let $(M,g_M)$ and $(N,g_N)$ be two Riemannian manifolds and $w: M \mapsto \R_{>0}$ be a smooth map. The warped metric on $M \times N$ is the Riemannian metric $g_M + w g_N$.
\end{definition}

The simplest example of a warped metric is on $\R_{>0} \times S_1$ with the metric $(\ud r)^2 + r^2 (\ud \theta)^2$, which is the Euclidean metric in polar coordinates on $\R^2 \setminus \{ 0 \}$.

\begin{proposition}
The geodesic equations for the warped metric in Definition \ref{ThWarped} are, for $(x,y) \in M \times N$,
\begin{align}\label{EqGeodesicWarped}
&\nabla_{\dot{x}} \dot{x} - \frac12 g_N(\dot{y},\dot{y}) \nabla w(x) = 0\\
&\nabla_{\dot{y}}{\dot{y}} +  \dot{y} \frac{\ud}{\ud t} \log(w(x(t))) = 0\,.
\end{align}
\end{proposition}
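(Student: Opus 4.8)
The plan is to characterize geodesics variationally as critical points of the energy functional and to exploit the product structure of $M \times N$ to decouple the two Euler–Lagrange equations. Concretely, a curve $t \mapsto (x(t),y(t))$ is a geodesic for the warped metric $g_M + w\,g_N$ if and only if it is a critical point of
\[
E[(x,y)] = \frac{1}{2}\int \Big( g_M(\dot{x},\dot{x}) + w(x)\,g_N(\dot{y},\dot{y})\Big)\,\ud t
\]
among curves with fixed endpoints. Because the warped metric is block-diagonal — there are no cross terms coupling the $M$- and $N$-directions — I can take independent variations in the $x$- and $y$-components and obtain the two stated equations separately.

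For the variation in $y$, only the second term of the integrand is affected, and along a fixed base curve $x(t)$ the scalar $w(t) \eqdef w(x(t))$ enters merely as a prescribed time-dependent weight. Working in local coordinates $y^a$ on $N$ with $g_N = (h_{ab})$, the Euler–Lagrange equation for $\int \frac12 w(t)\,h_{ab}\dot{y}^a\dot{y}^b\,\ud t$ reads $\frac{\ud}{\ud t}(w\,h_{ab}\dot{y}^b) - \frac12 w\,\partial_a h_{bc}\,\dot{y}^b\dot{y}^c = 0$. Dividing by $w$ and recognizing the two terms $\frac{\ud}{\ud t}(h_{ab}\dot{y}^b) - \frac12\partial_a h_{bc}\dot{y}^b\dot{y}^c$ as the $g_N$-geodesic operator with index lowered leaves the extra contribution $(\dot{w}/w)\,h_{ab}\dot{y}^b$. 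Since $\dot{w}/w = \frac{\ud}{\ud t}\log(w(x(t)))$, this is precisely $\nabla_{\dot{y}}\dot{y} + \dot{y}\,\frac{\ud}{\ud t}\log(w(x(t))) = 0$, the second equation.

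For the variation in $x$, the first term contributes the usual $g_M$-geodesic operator $\nabla_{\dot{x}}\dot{x}$, while the $x$-dependence of $w$ in the second term produces a potential-type force. In coordinates $x^i$ on $M$ with $g_M = (g_{ij})$, the $x^i$-component of the Euler–Lagrange equation is $\frac{\ud}{\ud t}(g_{ij}\dot{x}^j) - \frac12 \partial_i g_{jk}\dot{x}^j\dot{x}^k - \frac12 (\partial_i w)\,g_N(\dot{y},\dot{y}) = 0$. Using $\partial_i w = g_{ij}(\nabla w)^j$ to identify the $g_M$-gradient of $w$, and recognizing the first two terms as $g_{ij}(\nabla_{\dot{x}}\dot{x})^j$, yields $\nabla_{\dot{x}}\dot{x} - \frac12 g_N(\dot{y},\dot{y})\,\nabla w = 0$, as claimed.

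The computation is entirely routine; the only points requiring care are bookkeeping — keeping straight that $\nabla_{\dot{x}}\dot{x}$ and $\nabla w$ refer to the Levi-Civita connection and gradient of $g_M$, whereas $\nabla_{\dot{y}}\dot{y}$ refers to the Levi-Civita connection of $g_N$ — together with the observation that the base-point dependence of the weight converts $\dot{w}/w$ into the total time derivative $\frac{\ud}{\ud t}\log(w(x(t)))$ in the $N$-equation. Equivalently, one may compute the Christoffel symbols of the block-diagonal metric directly, for which the only nontrivial mixed symbols are $\Gamma^k_{ab} = -\frac12 (\nabla w)^k\,g_{N,ab}$ and $\Gamma^c_{ia} = \frac12\frac{\partial_i w}{w}\,\delta^c_a$, and substitute into $\ddot{z}^C + \Gamma^C_{AB}\dot{z}^A\dot{z}^B = 0$; both routes give the same result, and I would present the variational one as it makes the decoupling transparent.
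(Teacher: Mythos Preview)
Your variational derivation is correct and is a standard way to obtain the warped-product geodesic equations. Note, however, that the paper does not actually supply a proof of this proposition: it is stated in the appendix as background material and the authors proceed directly to its consequences (the conserved quantity $g_N(\dot y,\dot y)=c/w(x)^2$ and the resulting potential dynamics on $M$). So there is no ``paper's proof'' to compare against; your argument fills that gap cleanly, and the alternative Christoffel-symbol computation you mention would do so equally well.
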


These two equations imply that the geodesic motion on $N$ is a reparametrized geodesic. The motion on $M$ can be rewritten only in terms of $x$ as follows, since
\begin{align}
\frac{\ud}{\ud t} g_N(\dot{y},\dot{y}) &= 2g_N(\nabla_{\dot{y}}\dot{y},\dot{y}) = -2g_N(\dot{y},\dot{y}) \frac{\ud}{\ud t} \log(w(x(t)))
\end{align}
which implies that there exists a constant $c >0$, which is determined by the initial conditions of the geodesic such that
\begin{equation}
g_N(\dot{y},\dot{y}) = \frac{c}{w(x(t))^2}\,.
\end{equation}
Using this equality in the first geodesic equation of system \eqref{EqGeodesicWarped} leads to 
\begin{equation}
\nabla_{\dot{x}} \dot{x} - \frac12 \frac{c}{w(x(t))^2} \nabla w(x) = 0
\end{equation}
which can be rewritten as a potential evolution
\begin{equation}
\nabla_{\dot{x}} \dot{x} =- \frac12 \nabla \frac{c}{w}(x) \,.
\end{equation}
Solutions of this equation are the critical paths for the action $A(x,\dot{x}) = g_M(\dot{x},\dot{x}) - \frac{c}{2w}(x)$. This can be rewritten as follows.
\begin{proposition}\label{ThEisenhartOnRiemannianSide}
Let $V$ be a positive function on $M$, then the solutions of $\nabla_{\dot{x}} \dot{x} = - c \nabla V$ (for every positive constant $c$) are  the projections on $M$ of the geodesic flow on the warped product $M \times N$ for $w = \frac{1}{V}$.
\end{proposition}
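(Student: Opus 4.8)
The plan is to obtain the statement directly from the reduction of the warped-product geodesic equations \eqref{EqGeodesicWarped} carried out above. First I would record the first integral of the vertical motion: differentiating $g_N(\dot y,\dot y)$ along a geodesic and using the second equation of \eqref{EqGeodesicWarped} shows that $g_N(\dot y,\dot y)\,w(x(t))^2$ is conserved, say equal to $\kappa\ge 0$, so that $g_N(\dot y,\dot y) = \kappa/w(x(t))^2$. Substituting this into the horizontal equation $\nabla_{\dot x}\dot x = \tfrac12 g_N(\dot y,\dot y)\,\nabla w$ and using $\tfrac{\kappa}{w^2}\nabla w = -\nabla(\kappa/w)$ gives
\begin{equation}
\nabla_{\dot x}\dot x = -\tfrac12 \nabla\!\left(\frac{\kappa}{w}\right).
\end{equation}

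Specializing $w = 1/V$ turns $\kappa/w$ into $\kappa V$, so the projected curve satisfies $\nabla_{\dot x}\dot x = -\tfrac{\kappa}{2}\nabla V$. Setting $\kappa = 2c$ recovers exactly $\nabla_{\dot x}\dot x = -c\,\nabla V$, and since $\kappa$ runs over all nonnegative reals as the $N$-component of the initial velocity varies, every positive $c$ is attained. This gives one inclusion: the projection onto $M$ of any warped geodesic solves the potential equation for the corresponding constant.

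For the converse I would lift an arbitrary solution $x(t)$ of $\nabla_{\dot x}\dot x = -c\,\nabla V$ to a genuine geodesic. Fix $y_0\in N$ and an initial velocity $\dot y_0$ with $g_N(\dot y_0,\dot y_0) = 2c/w(x(0))^2$, and let $y(t)$ solve the vertical equation of \eqref{EqGeodesicWarped}. Writing $y(t) = \gamma(s(t))$ with $\gamma$ a geodesic of $(N,g_N)$ reduces this equation to $\ddot s + \dot s\,\tfrac{\ud}{\ud t}\log w(x(t)) = 0$, whence $\dot s \propto 1/w(x(t))$ and $y$ exists on the whole interval of definition of $x$. By construction the conserved quantity equals $2c$, so the horizontal equation for the pair $(x,y)$ collapses to the prescribed potential equation, exhibiting $x(t)$ as the projection of the warped geodesic $(x,y)$.

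The computation is routine; the only genuine care is in the bookkeeping of the constant (the factor $\tfrac12$, absorbed into $\kappa = 2c$) and in verifying that the vertical equation always admits a solution realizing the prescribed value of $g_N(\dot y,\dot y)$. The latter is automatic once the vertical equation is recognized as a reparametrized geodesic on $N$ through the time change $s(t) = \int_0^t w(x(\tau))^{-1}\,\ud\tau$, so there is no substantive obstacle beyond this.
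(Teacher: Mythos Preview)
Your argument is correct and follows exactly the derivation the paper gives just above the proposition: conserve $g_N(\dot y,\dot y)\,w^2$, substitute into the horizontal equation, and specialize $w=1/V$. You go slightly further than the paper by spelling out the converse lift (realizing a given solution $x(t)$ as the projection of a warped geodesic via the reparametrized $N$-geodesic $y=\gamma(s(t))$), which the paper leaves implicit; otherwise the approaches coincide.
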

The curvature of warped metrics has been computed for instance in \cite{BishopWarped} and the formula is as follows, denoting respectively $K_w,K_M,K_N$, the sectional curvature of $M \times_w N$, $M$ and $N$, and $\langle \cdot, \cdot \rangle_N$ or $| \cdot |_N$ denotes the scalar product or norm given by $g_N$,
\begin{multline} \label{EqCurvature}
K_w(x,y)((u_1,v_1),(u_2,v_2)) = K_M(u_1,u_2) (|u_1|^2 |u_2|^2 - \langle u_1,u_2\rangle^2) \\- w(x) [|v_1|_N^2\nabla^2w(x)(u_2,u_2) + |v_2|_N^2\nabla^2w(x)(u_1,u_1) - 2 \langle v_1,v_2\rangle_N \nabla^2w(x)(u_1,u_2)]\\ + w(x)^2[K_N(v_1,v_2) - |\nabla w(x)|^2](|v_1|_N^2 |v_2|_N^2 - \langle v_1,v_2\rangle_N^2)\,.
\end{multline}
\bibliographystyle{plain}      
\bibliography{articles,SecOrdLandBig,references,sum_of_kernels}   

\begin{thebibliography}{10}

\bibitem{BishopWarped}
Richard Bishop and B~O'Neill.
\newblock Manifolds of negative curvature.
\newblock {\em Trans. Amer. Math. Soc.}, 145:1--49, 11 1969.

\bibitem{CH}
Roberto {Camassa} and Darryl~D. {Holm}.
\newblock {An integrable shallow water equation with peaked solitons.}
\newblock {\em {Phys. Rev. Lett.}}, 71(11):1661--1664, 1993.

\bibitem{Constantin2003}
A.~Constantin and B.~Kolev.
\newblock Geodesic flow on the diffeomorphism group of the circle.
\newblock {\em Comment. Math. Helv.}, 78(4):787--804, 2003.

\bibitem{Constantin2008}
Adrian Constantin and David Lannes.
\newblock The hydrodynamical relevance of the {C}amassa--{H}olm and
  degasperis--procesi equations.
\newblock {\em Archive for Rational Mechanics and Analysis}, 192(1):165--186,
  2008.

\bibitem{Dai1998}
H.~H. Dai.
\newblock Model equations for nonlinear dispersive waves in a compressible
  mooney-rivlin rod.
\newblock {\em Acta Mechanica}, 127(1):193--207, Mar 1998.

\bibitem{Ebin1970}
David~G. Ebin and Jerrold Marsden.
\newblock Groups of diffeomorphisms and the motion of an incompressible fluid.
\newblock {\em Ann. of Math. (2)}, 92:102--163, 1970.

\bibitem{Eisenhart}
Luther~Pfahler Eisenhart.
\newblock Dynamical trajectories and geodesics.
\newblock {\em Annals of Mathematics}, 30(1/4):591--606, 1928.

\bibitem{VialardGallouetCH}
Thomas Gallou{\"e}t and Fran{\c c}ois-Xavier Vialard.
\newblock The {C}amassa--{H}olm equation as an incompressible {E}uler equation:
  A geometric point of view.
\newblock {\em Journal of Differential Equations}, 264(7):4199 -- 4234, 2018.

\bibitem{Holm1998}
D.~D. Holm, J.~E. Marsden, and T.~S. Ratiu.
\newblock The {E}uler-{P}oincar\'{e} equations and semidirect products with
  applications to continuum theories.
\newblock {\em Adv. Math.}, 137:1--81, 1998.

\bibitem{holm2005momentum}
Darryl~D Holm and Jerrold~E Marsden.
\newblock {Momentum maps and measure-valued solutions (peakons, filaments, and
  sheets) for the EPDiff equation}.
\newblock In {\em The breadth of symplectic and Poisson geometry}, pages
  203--235. Springer, 2005.

\bibitem{KhesinGafa}
B.~{Khesin}, J.~{Lenells}, G.~{Misiolek}, and S.~C. {Preston}.
\newblock {Geometry of diffeomorphism groups, complete integrability and
  optimal transport}.
\newblock {\em ArXiv e-prints}, May 2011.

\bibitem{ModinMadelung}
B.~{Khesin}, G.~{Misiolek}, and K.~{Modin}.
\newblock {Geometric Hydrodynamics via Madelung Transform}.
\newblock {\em ArXiv e-prints}, November 2017.

\bibitem{Kouranbaeva1999}
Shinar Kouranbaeva.
\newblock The {C}amassa-{H}olm equation as a geodesic flow on the
  diffeomorphism group.
\newblock {\em J. Math. Phys.}, 40(2):857--868, 1999.

\bibitem{Michor2005}
Peter~W. Michor and David Mumford.
\newblock Vanishing geodesic distance on spaces of submanifolds and
  diffeomorphisms.
\newblock {\em Doc. Math.}, 10:217--245, 2005.

\bibitem{Michor2013}
Peter~W. Michor and David Mumford.
\newblock On {E}uler's equation and '{EPDiff}'.
\newblock {\em J. Geom. Mech.}, 5(3):319--344, 2013.

\bibitem{SurveyCH}
Luc Molinet.
\newblock On well-posedness results for {C}amassa-{H}olm equation on the line:
  A survey.
\newblock {\em Journal of Nonlinear Mathematical Physics}, 11(4):521--533,
  2004.

\bibitem{CH2First}
Peter~J. Olver and Philip Rosenau.
\newblock Tri-hamiltonian duality between solitons and solitary-wave solutions
  having compact support.
\newblock {\em Phys. Rev. E}, 53:1900--1906, Feb 1996.

\bibitem{Preston2013}
Stephen~C. Preston.
\newblock The geometry of barotropic flow.
\newblock {\em Journal of Mathematical Fluid Mechanics}, 15(4):807--821, Dec
  2013.

\bibitem{TaoUniversality}
T.~{Tao}.
\newblock {On the universality of the incompressible Euler equation on compact
  manifolds}.
\newblock {\em ArXiv e-prints}, July 2017.

\end{thebibliography}

\end{document}